\newsavebox{\@brx}
\newcommand{\llangle}[1][]{\savebox{\@brx}{\(\m@th{#1\langle}\)}%
	\mathopen{\copy\@brx\kern-0.5\wd\@brx\usebox{\@brx}}}
\newcommand{\rrangle}[1][]{\savebox{\@brx}{\(\m@th{#1\rangle}\)}%
	\mathclose{\copy\@brx\kern-0.5\wd\@brx\usebox{\@brx}}}
\newtheorem{theorem}{Theorem}[section]
\newtheorem{corollary}[theorem]{Corollary}
\newtheorem{prop}[theorem]{Proposition}
\theoremstyle{definition}
\newtheorem{defn}[theorem]{Definition}
\newtheorem{example}[theorem]{Example}
\newtheorem{remark}[theorem]{Remark}
\numberwithin{equation}{subsection}
\newcommand{\Aut}{\operatorname{Aut}}
\newcommand{\GLcat}{\mathbf{GL}}
\newcommand{\Conj}{\operatorname{Conj}}
\newcommand{\C}{\mathbf{C}}
\newcommand{\Ab}{\mathbf{Ab}}
\newcommand{\T}{\mathrm{T}}
\newcommand{\Sa}{\textrm{S}}
\newcommand{\Hom}{\operatorname{Hom}}
\newcommand{\GL}{\operatorname{GL}}
\newcommand{\U}{\mathtt{u}}
\newcommand{\D}{\mathtt{d}}
\newcommand{\id}{\mathrm{id}}
\newcommand{\ie}{\mathrm{i.e.}}
\begin{document}
	\title[Generalised Legendrian racks]{Generalised Legendrian racks of Legendrian links}
	\author{Biswadeep Karmakar}	
	\author{Deepanshi Saraf}
	\author{Mahender Singh}
	
	\address{Department of Mathematical Sciences, Indian Institute of Science Education and Research (IISER) Mohali, Sector 81,  S. A. S. Nagar, P. O. Manauli, Punjab 140306, India.}
	\email{biswadeep.isi@gmail.com}
	\email{saraf.deepanshi@gmail.com}
	\email{mahender@iisermohali.ac.in}
	
	\subjclass[2020]{Primary 57K10, 57K12; Secondary 57K33}
	
	\keywords{Beck module, Legendrian link, generalised Legendrian rack, Legendrian coloring,  generalised Legendrian rack module, slice category}

\begin{abstract}
A generalised Legendrian rack is a rack equipped with a Legendrian structure, which is a pair of maps encoding the information of Legendrian Reidemeister moves together with up and down cusps in the front diagram of an oriented Legendrian link. Employing a purely rack theoretic approach, we associate a generalised Legendrian rack (or a $\GL$-rack) to an oriented Legendrian link, and prove that it is an invariant under Legendrian isotopy. As immediate applications, we prove that this invariant distinguishes infinitely many oriented Legendrian unknots and oriented Legendrian trefoils. To comprehend their algebraic structure, we prove that every $\GL$-rack admits a homogeneous representation. Further, using the idea of trunks, we define modules over $\GL$-racks, and prove the equivalence of the category of $\GL$-rack modules and the category of Beck modules over a fixed $\GL$-rack.
\end{abstract}
\maketitle

\section{Introduction}
Racks are algebraic structures equipped with distributive binary operations. These objects are used for constructing invariants of framed topological links in $\mathbb{R}^3$, and they also give non-degenerate solutions of the Yang-Baxter equation. Quandles are stronger avatars of racks, which encode algebraically the three Reidemeister moves of planar diagrams of oriented topological links in $\mathbb{R}^3$. The study of racks and quandles gained thrust after the pioneering works of Joyce \cite{Joyce1979} and Matveev \cite{MR0672410}, who showed that fundamental quandles are complete invariants of non-split links up to orientation of the ambient space. Although fundamental quandles are strong invariants of oriented topological links, the isomorphism problem for them is difficult. This motivated search for invariants of quandles themselves, most notably, (co)homological invariants.
	\par
	
	A (co)homology theory for racks was introduced in \cite{MR1364012, MR2255194} as homology of the associated rack space of a rack. This led to the development of a (co)homology theory for quandles in \cite{MR1990571}, where state-sum invariants using quandle cocycles as weights were defined for oriented topological links. Giving a new perspective to quandle cohomology in terms of model categories, the recent work \cite{MR3937311} showed that quandle cohomology is a Quillen cohomology.
	\par
	
	There are various generalisations and refinements of classical knot theory. In this paper, we consider a geometrically motivated refinement, namely, the Legendrian knot theory. We equip $\mathbb{R}^3$ with the standard contact structure, which is the kernel of the contact form $dz-ydx$. Then, a Legendrian link is a smooth link in $\mathbb{R}^3$ such that it is tangent to the standard contact structure at each of its points. Analogous to topological links, Legendrian links are studied upto Legendrian isotopy and via appropriate planar projections called front projections. Each Legendrian link admits a front projection with finitely many crossings and cusps. An analogue of the classical Reidemeister Theorem is also known in the Legendrian setting, which makes the diagram based study of these objects feasible. Many invariants of Legendrian links are already known in the literature. See, for example \cite{MR1946550, MR1957049}, and the survey article \cite{MR2179261} for more details. It turns out that Legendrian isotopic Legendrian links are smoothly isotopic, but smoothly isotopic Legendrian links may not be Legendrian isotopic. For instance, there are infinitely many Legendrian knots (up to Legendrian isotopy) that are smoothly isotopic to the unknot. In fact, a complete classification of Legendrian unknots is already known \cite{MR2496415, MR1619122}. Thus, the classification problem of Legendrian links may be considered as a refinement of the classification problem of topological links.
	\par

It is intriguing to know whether there are analogues of racks and related (co)homology theories for studying Legendrian links. Furthermore, it is natural to ask whether such an invariant for Legendrian links recovers the corresponding known invariant for underlying topological links. We attempt to address these questions by introducing generalised Legendrian racks (or simply $\GL$-racks), which, informally speaking, are racks together with a pair of maps that encode the information of Legendrian Reidemeister moves along with the information of upward and downward cusps of an oriented front diagram. Our construction strengthens and enhances the preceding rack-theoretic invariants appearing in \cite{MR4292403} and \cite{arXiv:1706.07626}. Note that, the rack invariant considered in \cite{MR4292403} already recovers the one appearing in \cite{arXiv:1706.07626}, and that \cite{MR4292403} is focussed on colouring invariants obtained from finite Legendrian racks. Our notion of a $\GL$-rack of an oriented Legendrian link also appears to be an appropriate refinement of the fundamental quandle of the underlying oriented topological link. We shall see that bi-Legendrian rack introduced in \cite{MR4586264} is equivalent to the $\GL$-rack that we introduce here. Further, the focus of \cite{MR4586264} is again coloring invariants of Legendrian knots. As an application of our construction, we are able to immediately distinguish infinitely many oriented Legendrian unknots and trefoils. We then proceed to explore the algebraic and categorical properties of $\GL$-racks. Further, we also identify the appropriate coefficient objects for defining (co)homology theories for $\GL$-racks. Our approach is purely rack theoretic, and does not incorporate any geometric ideas from the theory of Legendrian links.
	\par
	
The paper is organised as follows. In Section \ref{sec legendrian links}, we recall some basics from Legendrian knot theory. In Section \ref{sec generalised legendrian racks}, we define a generalised Legendrian rack, and give many examples of these structures. In Section \ref{generalised Legendrian rack of link}, we define the generalised Legendrian rack of an oriented Legendrian link, and prove that it is invariant under the Legendrian equivalence (Theorem \ref{generalised Legendrian rack invariant}). As an application, we deduce that this invariant distinguishes infinitely many oriented Legendrian unknots (Theorem \ref{unknots distinguish}), and infinitely many oriented Legendrian trefoils (Theorem \ref{trefoils distinguish}). Furthermore, it is proved that two topologically distinct non-split oriented Legendrian links cannot have isomorphic generalised Legendrian racks (Theorem \ref{legendrian link distinguishes distinct top links}). We also discuss the idea of colouring of an oriented Legendrian link by a generalised Legendrian rack, and deduce that it gives an invariant of Legendrian links (Corollary \ref{cor generalised Legendrian coloring}). Adapting the idea of Joyce to the Legendrian setting, in Section \ref{homogenous representation}, we prove that every generalised Legendrian rack admits a homogeneous representation as a generalised Legendrian rack which is built up with a disjoint union of cosets of a family of subgroups of some appropriate group (Theorem \ref{thm homogenous representation}). In Section \ref{sec GL modules}, we define appropriate modules over $\GL$-racks, which could be used for introducing (co)homology theories for $\GL$-racks, and give some examples. In Theorem \ref{equivalence of categories for symmetric rack modules} of Section \ref{sec Category of modules over GL-racks}, we prove the equivalence of the category of $\GL$-rack modules and the category of Beck modules over a fixed $\GL$-rack.
	\medskip
	
	\section{Contact structure and Legendrian links}\label{sec legendrian links}
	We begin by recalling some basic ideas from contact topology that are essential for our discussions. The reader may refer to \cite{MR1648083, MR2024631, MR2397738} for finer details. We restrict ourselves to 3-manifolds, specifically $\mathbb{R}^3$, and refer the reader to the excellent survey article \cite{MR2179261} on Legendrian knots.
	\par

	\begin{defn}
		A 1-form $\alpha$ on an oriented 3-dimensional manifold $M$ is called a {\it contact form} if $\alpha_x \wedge (d \alpha_x) \neq 0$ for all $x \in M$. A hyperplane distribution $\xi$ is a {\it contact structure} on $M$ if it is the kernel of a contact form, that is,  $\xi=\ker(\alpha)$ for a contact 1-form $\alpha$ on $M$. A manifold $M$ equipped with a contact structure $\xi$ is called a {\it contact manifold}, and denoted as $(M, \xi)$.
	\end{defn}
	
	Here, $\xi$ being a hyperplane distribution means that $\xi_x$ is a two dimensional subspace of the tangent space $T_xM$ at each $x\in M$. We also require that $\alpha \wedge d \alpha$ defines the given orientation on $M$. The contact condition  $\alpha \wedge d \alpha \ne 0$ is a {\it total non-integrability} condition, which ensures that there is no embedded surface in $M$ which is tangent to $\xi$ everywhere. 
	\par

	\begin{example}
		The simplest contact structure on $\mathbb{R}^3$ (with respect to the Cartesian coordinates $(x, y, z)$) is given by
		$$\xi^{std} = \textrm{span} ~\left\{ \frac{\partial}{\partial y},~ \frac{\partial}{\partial x} +y\frac{\partial}{\partial z} \right\},$$
		which is the kernel of the contact form $$\alpha_0= dz-ydx.$$  The contact structure $\xi^{std}$ is called the {\it standard contact structure} on $\mathbb{R}^3$. Note that the hyperplanes along the $xz$-plane are all horizontal, that is, parallel to the $xy$-plane. Moving along the positive $y$-axis from the origin, the hyperplanes start to twist around the $y$-axis in a left handed manner with the planes making a $90^\circ$ twist at infinity (see Figure \ref{standard contact st}, taken from \cite[Section 2.1]{MR2179261}).
		\begin{figure}[h]
			\centering
			\includegraphics[width=.5\textwidth]{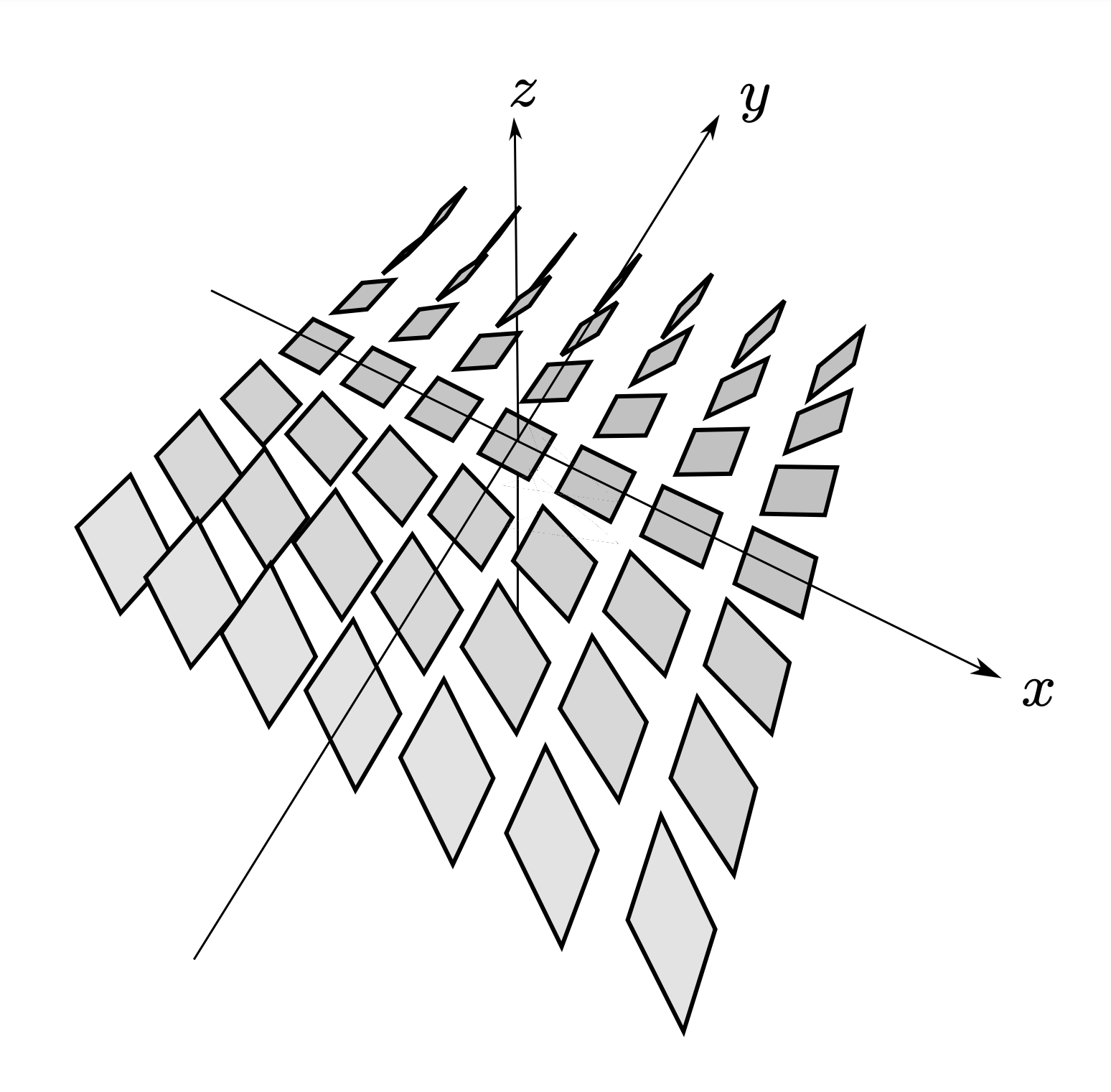}
			\caption{Standard contact structure on  $\mathbb{R}^3$.}\label{standard contact st}
		\end{figure}
	\end{example}
	
	There are many contact structures on a 3-manifold $M$. But, by Darboux's Theorem, any point in $M$ has an open neighborhood that is diffeomorphic to an open neighborhood of the origin in  $\mathbb{R}^3$ by a diffeomorphism that takes the contact structure on $M$ to the standard contact structure $\xi^{std}$ on $\mathbb{R}^3$. Throughout the article, the contact form $\alpha_0 $ and the contact structure $\xi^{std} $ on $\mathbb{R}^3$ are fixed.

	\begin{defn}
		A  {\it Legendrian knot} in  $(\mathbb{R}^3,\xi^{std}) $ is a smooth embedding $\gamma:\mathbb{S}^1 \rightarrow \mathbb{R}^3$ such that  $\gamma'(\theta) \in \xi^{std}_{\gamma(\theta)}  \text{ for all } \theta \in \mathbb{S}^1 .$
	\end{defn}
	
	As in topological knot theory, planar projections play a crucial role in Legendrian knot theory. Let $\gamma(\theta)=(x(\theta),y(\theta),z(\theta))$ be a parametrized Legendrian knot in $(\mathbb{R}^3,\xi^{std})$. Then the {\it front projection} of $\gamma$ is the map $\pi: \mathbb{S}^1 \rightarrow \mathbb{R}^2$ given by $\pi(\theta)=(x(\theta),z(\theta))$. A parametrization of a Legendrian  knot will induce an orientation on the knot, and hence we consider oriented Legendrian knots by choosing this parametrization induced orientation.
	\par
	
	Note that a parametrized Legendrian knot $\gamma $ satisfies the equation $z'(\theta) = y(\theta)x'(\theta)$. There is no point on the front projection where the tangent is vertical, instead we have cusps on both right and left side. At cusp points, we have $x'(\theta)=z'(\theta)=0$ and $y=0 $. Moving away from cusps, the $y$-coordinate can be recovered as $y(\theta)=\frac{z'(\theta)}{x'(\theta)}$. We note that, at a crossing in the front projection, the strand with the lower slope will always be above. For example, see Figure \ref{Legendrian trefoil} for the front projection of a Legendrian trefoil.  A {\it Legendrian link} can be defined analogously by replacing $\mathbb{S}^1$ with the disjoint union $\sqcup_{i=1}^n~ \mathbb{S}^1$.
	
	\begin{figure}[h] 
		\centering
		\includegraphics[width=.25\textwidth]{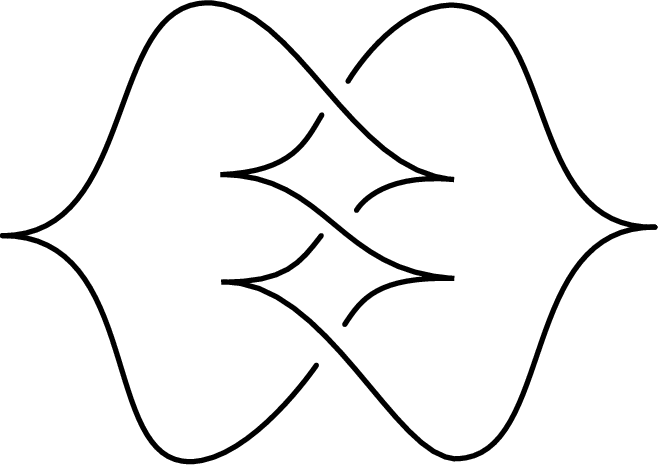}
		\caption{A Legendrian trefoil.}\label{Legendrian trefoil}
	\end{figure}
	
	\begin{defn}
		A {\it front diagram} is an immersion of a disjoint union of circles in the $xz$-plane such that the following hold:
		\begin{enumerate}
			\item The immersion is an embedding at all but finitely many points. At these points, there is a cusp or a double point.
			\item There is no vertical tangency at any point.
		\end{enumerate}
	\end{defn}
	
	The planar Legendrian isotopy between two front diagrams is defined in the usual way. We note that every Legendrian link gives rise to a front diagram by taking the front projection. Conversely, given a front diagram, there is a unique Legendrian link whose front projection agrees with this front diagram. 
	
	\begin{defn}\label{def1}
		Two Legendrian links $K$ and $K'$ are called {\it Legendrian equivalent} if there exists a smooth map $F: \mathbb{S}^1\times [0,1] \rightarrow (\mathbb{R}^3, \xi^{std})$ such that the following hold:
		\begin{enumerate}
			\item $F(\mathbb{S}^1 \times \{ 0\})=K$.
			\item $F(\mathbb{S}^1 \times \{ 1\})=K'$.
			\item  $F|_{\mathbb{S}^1 \times \{ t\}}$ is a Legendrian embedding for each $t \in [0, 1]$.
		\end{enumerate}
	\end{defn}
	
	The local moves of a front diagram shown in Figure \ref{Local Legendrian Reidemeister moves} are called {\it Legendrian Reidemeister moves}, written LR moves for short.
	
	\begin{figure}[h]
		\centering
		\includegraphics[width=.5\textwidth]{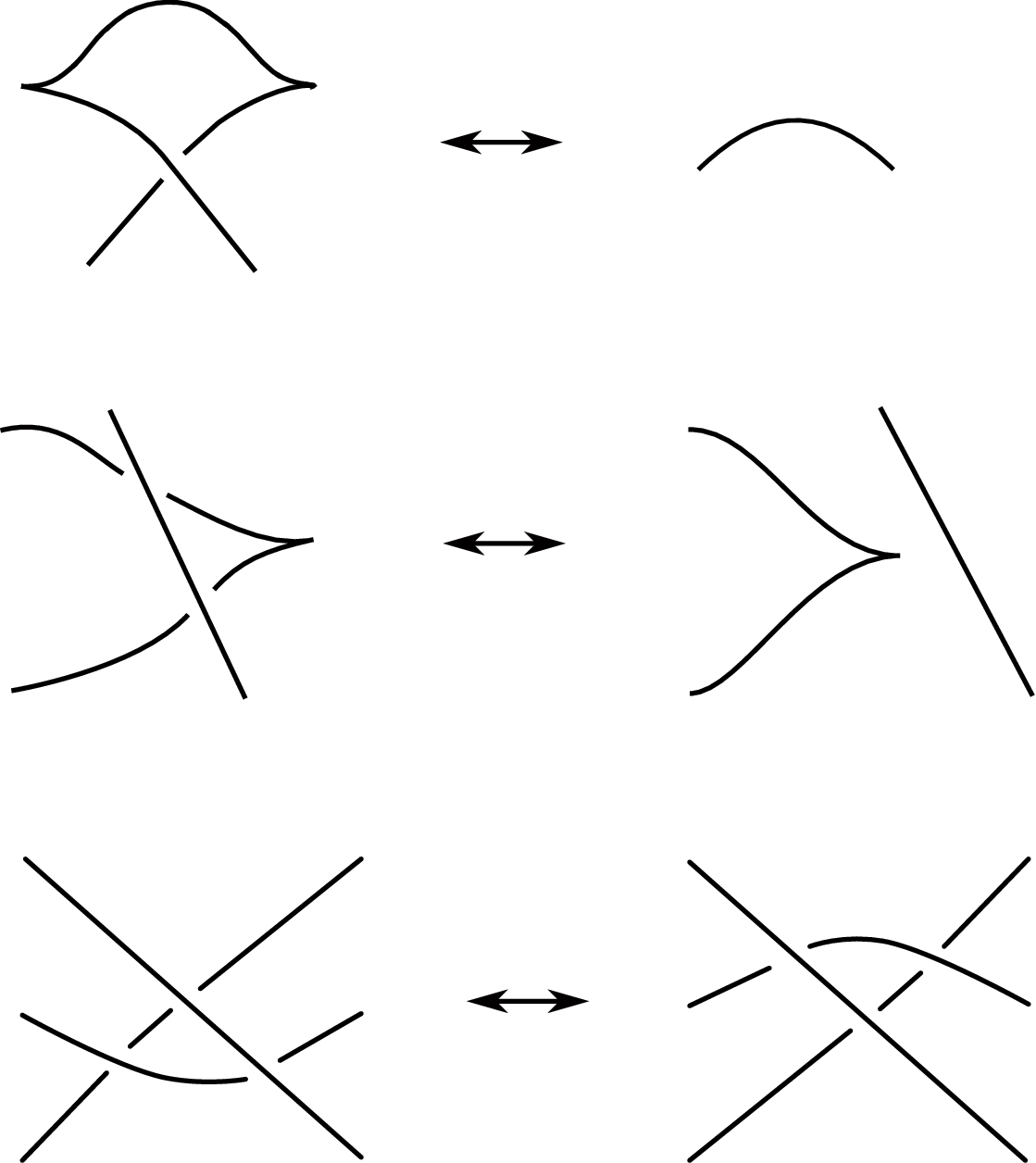}
		\caption{Legendrian Reidemeister moves.}\label{Local Legendrian Reidemeister moves}
	\end{figure}

	An analogue of the Reidemeister Theorem for topological links is known for Legendrian links \cite{MR0911776, MR1186009}.
	
	\begin{theorem}\label{legendrian reidemeister thm}
		Two Legendrian links $K$ and $K'$ are Legendrian equivalent if and only if any front diagram $D(K)$ of $K$ can be obtained from any front diagram $D(K')$ of $K'$ by a finite sequence of Legendrian Reidemeister moves and planar Legendrian isotopy.
	\end{theorem}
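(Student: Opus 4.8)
The plan is to follow the strategy of the classical Reidemeister theorem, transported to the Legendrian setting through the front projection. The crucial structural fact, already recorded above, is that a front diagram determines a unique Legendrian link: away from cusps the missing coordinate is recovered by $y(\theta)=z'(\theta)/x'(\theta)$, and the behaviour near a cusp is prescribed. Consequently, passing to front projections loses no Legendrian information, and the entire statement can be rephrased as a comparison of one-parameter families of fronts. This reduction is what makes the diagrammatic argument possible.

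I would dispatch the ``if'' direction first, since it is the easier half. Here one checks that each local move in Figure \ref{Local Legendrian Reidemeister moves}, together with planar Legendrian isotopy, lifts to an honest Legendrian isotopy of the associated links. For a planar isotopy of fronts that keeps the diagram generic---an immersion with only transverse double points, semicubical cusps, and no vertical tangency---the recovered $y$-coordinate varies smoothly in the parameter, so the lift is a smooth family $F$ of Legendrian embeddings. For each individual LR move one exhibits an explicit local Legendrian isotopy supported in a disc, realizing the move while fixing everything outside; concatenating these local isotopies produces a Legendrian equivalence between $K$ and $K'$.

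For the ``only if'' direction I would run a parametrized general-position argument in the spirit of Cerf theory. Given a Legendrian equivalence $F\colon \mathbb{S}^1\times[0,1]\to(\mathbb{R}^3,\xi^{std})$, set $K_t=F(\mathbb{S}^1\times\{t\})$ and consider the induced one-parameter family of front projections $\pi(K_t)$ in the $xz$-plane. After a $C^\infty$-small perturbation of $F$ rel endpoints, through Legendrian isotopies and justified by multijet transversality, I may assume the family $\{\pi(K_t)\}_{t\in[0,1]}$ is generic: for all but finitely many parameter values $t_1<\cdots<t_m$ the projection $\pi(K_t)$ is a generic front diagram, and at each exceptional value the family crosses a codimension-one stratum transversally. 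Between consecutive exceptional values the combinatorial type of the front is constant, so there the fronts differ only by a planar Legendrian isotopy.

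It then remains to classify the codimension-one degenerations and to match them with the listed moves. Using Thom--Boardman singularity theory for the family of planar maps, subject to the Legendrian constraint forbidding vertical tangents, the generic one-parameter degenerations should be exactly: a transverse triple point of the projection (the Legendrian analogue of the third Reidemeister move), a first-order self-tangency between two smooth strands (the Legendrian second move), and the birth, death, or passage of a cusp past a strand or past another cusp (the remaining first-move and cusp moves). Verifying that this list is \emph{complete} is the main obstacle: one must confirm both that the non-verticality condition rules out the inflection and tangency phenomena present in the unconstrained planar theory, and that each surviving degeneration is realized by precisely one of the moves in Figure \ref{Local Legendrian Reidemeister moves}. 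Granting the classification, the equivalence $F$ decomposes into a finite alternation of planar Legendrian isotopies and LR moves, which is the desired assertion.
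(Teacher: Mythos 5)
First, a point of orientation: the paper does not prove this statement at all --- it is quoted as a known theorem with references \cite{MR0911776, MR1186009}, so there is no in-paper argument to compare yours against. Judged on its own terms, your outline follows the standard strategy of those sources: lift the moves and generic planar isotopies of fronts to Legendrian isotopies for the ``if'' direction (using that $y=z'/x'$ recovers the link from its front), and run a one-parameter transversality argument for the ``only if'' direction. That scaffolding is sound.

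The proposal nevertheless has a genuine gap, and it sits exactly where you flag it: the classification of codimension-one degenerations is the entire content of the theorem, and you defer it (``Granting the classification\dots''). Moreover, the list you propose is partly wrong. A first-order self-tangency of two smooth branches of the front cannot occur in a family of Legendrian \emph{embeddings}: at such a tangency the two preimage points share $(x,z)$ and the slope $dz/dx$, hence share $y=dz/dx$, i.e.\ they coincide in $\mathbb{R}^3$, so $K_t$ would fail to be embedded at that instant. Thus the Legendrian second moves are \emph{not} the analogue of the planar self-tangency move; they arise from a smooth branch becoming tangent to the front at a cusp point (a branch passing a cusp), while the first move comes from a swallowtail (birth or death of a pair of cusps together with a crossing) and the third from a transverse triple point. ``Passage of a cusp past another cusp'' is not a codimension-one stratum and should not appear. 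A complete proof must (i) set up the appropriate jet space for one-parameter families of fronts of Legendrian embeddings, (ii) enumerate the codimension-one strata there --- swallowtail, cusp--branch tangency, triple point, together with the ``dangerous'' self-tangency stratum that embeddedness excludes --- and (iii) give local normal forms near each stratum identifying the transition with one of the moves of Figure \ref{Local Legendrian Reidemeister moves} in all orientations. Until (ii) and (iii) are carried out, and the mismatch above is corrected, the argument is a plan rather than a proof.
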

	
	In the pursuit of answering the formidable problem of classifying Legendrian links, several invariants have been studied in the literature. We briefly recall a few classical invariants here, and refer the reader to \cite{MR2179261} for a detailed and geometric explanation of the same.
	\par
	The simplest invariant of a Legendrian link is its \textit{topological link type}. The next invariant, called the \textit{Thurston-Bennequin invariant} and denoted by $tb(K)$, can be computed from the oriented front diagram $D(K)$ of $K$ as 
	$$tb(K)=\mathrm{writhe}(\mathit{D(K)})-\frac{1}{2}(\mathrm{number~of~cusps~in~} \mathit{D(K)} ).$$
	Another well-known invariant is the \textit{rotation number} $r(K)$ of the Legendrian link $K$, which is computed  from the oriented front diagram $D(K)$ of $K$ as
	$$r(K)=\frac{1}{2}(D-U),$$ where $D$ is the number of down cusps and $U$ is the number of up cusps in $D(K)$.
	\par
	
	It turns out that these classical invariants, although powerful, are not enough to distinguish several Legendrian links, which led to development of more sophisticated invariants such as Chekanov-Eliashberg DGA \cite{MR1946550}, etc.
	\medskip
	
	\section{generalised Legendrian racks}\label{sec generalised legendrian racks}
	Recall that, a {\it rack} $(X, *)$ is a set $X$ with a binary operation $*$ satisfying the following:
	\begin{enumerate}
		\item Given $x,y\in X$, there exists a unique $z\in X$ such that $z\ast y=x$.
		\item $(x \ast y)\ast z=(x \ast z)\ast{(y\ast z)}$ for all $x,y,z\in X$. 
	\end{enumerate}
	
	A {\it quandle} is a rack with the additional axiom of idempotency, that is, $x*x=x$ for all $x \in X$. For example, each group $G$ turns into a quandle $\Conj(G)$ with respect to the binary operation $x*y= y^{-1} xy$ of conjugation.  A generalised Legendrian rack is a rack with an additional structure and its defining axioms are derived from the oriented Legendrian Reidemeister moves.
	
	\begin{defn}
		A \textit{generalised Legendrian rack} or a $\GL$-rack  is a quadruple $(X, \ast, \U,\D)$, where 	$(X, \ast)$ is a rack and  $\U,\D: X \rightarrow X$ are maps such that the 
		following hold for all $x, y \in X$:
		\begin{enumerate}
			\item[(L1)~] $ \U\D(x \ast x) =x$.
			\item[(L1$^{\prime}$)] $ \D\U(x \ast x) =x$.
			\item[(L2)~] $ \U(x \ast y)= \U(x) \ast y $.
			\item[(L2$^{\prime}$)] $ \D(x \ast y)= \D(x) \ast y $.
			\item[(L3)~] $x  \ast \U(y)= x \ast y$. 
			\item[(L3$^{\prime}$)] $x  \ast \D(y)= x \ast y$. 
		\end{enumerate}	
		We refer the pair $(\U,\D)$ as a \textit{generalised Legendrian structure or a $\GL$-structure} on the rack $(X, *)$.
	\end{defn}
	
	Note that, if $(X, \ast,\U,\D)$ is a $\GL$-rack, then $(X, \ast)$ is a quandle if and only if $\D\U =\id_X= \U \D$. We begin with the following observations.
	
	\begin{prop}\label{u,d an auto}
		Let $(X, \ast,\U,\D)$ be a $\GL$-rack. Then the following hold:
		\begin{enumerate}
			\item The maps $\U$ and $\D$ are automorphisms of the underlying rack $(X, *)$.
			\item $\D\U(x) \ast x=x$ and $\U\D(x) \ast x=x$ for all $x \in X$.
			\item $\D\U(x)=\U\D(x)$ for all $x \in X$.
			\item If the underlying rack $(X, *)$ is involutory, $\ie$, $x \ast y =x \ast^{-1} y$ for all $x,y \in X$, then $(X, \ast,\U^{-1},\D^{-1})$ is also a $\GL$-rack.
		\end{enumerate}
	\end{prop}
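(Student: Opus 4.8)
The plan is to prove the four assertions in the order (2), (3), (1), (4), since the later parts reuse the earlier computations. I would begin with (2). Setting $y=x$ in (L2$'$) gives $\D(x\ast x)=\D(x)\ast x$; applying $\U$ and invoking (L2) with first entry $\D(x)$ yields $\U\D(x\ast x)=\U\D(x)\ast x$, whose left-hand side is $x$ by (L1). Hence $\U\D(x)\ast x=x$, and the identity $\D\U(x)\ast x=x$ follows symmetrically from (L2), (L2$'$) and (L1$'$). Part (3) is then immediate: both $\U\D(x)$ and $\D\U(x)$ are solutions $z$ of $z\ast x=x$, so the uniqueness in the first rack axiom forces $\U\D(x)=\D\U(x)$.

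For (1), I would first record that $\U$ and $\D$ preserve $\ast$. By (L2) we have $\U(x\ast y)=\U(x)\ast y$, and by (L3) applied with first entry $\U(x)$ this equals $\U(x)\ast\U(y)$; thus $\U$ is a rack endomorphism, and $\D$ likewise via (L2$'$), (L3$'$). The substantive point is bijectivity, which I would obtain by showing that the squaring map $S\colon x\mapsto x\ast x$ is a two-sided inverse of both $\U\D$ and $\D\U$. One direction is purely axiomatic: (L1) and (L1$'$) say exactly $\U\D\circ S=\id_X=\D\U\circ S$. For the reverse direction, writing $u=\U\D(x)$, part (2) gives $u\ast x=x$; self-distributivity then yields $(u\ast u)\ast x=(u\ast x)\ast(u\ast x)=x\ast x$, and right-cancellation (the first rack axiom) gives $u\ast u=x$, i.e.\ $S\circ\U\D=\id_X$, and symmetrically $S\circ\D\U=\id_X$. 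Consequently $\U\D$ and $\D\U$ are bijections; surjectivity of $\U\D$ forces $\U$ surjective while injectivity of $\D\U$ forces $\U$ injective (and symmetrically for $\D$), so both $\U$ and $\D$ are bijective, hence rack automorphisms.

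Finally, for (4) I would verify the six axioms for $(X,\ast,\U^{-1},\D^{-1})$, which is meaningful by (1). Axioms (L2), (L2$'$) persist because (L2), (L2$'$) assert that $\U$ and $\D$ commute with every right translation $R_y\colon x\mapsto x\ast y$, so their inverses commute with $R_y$ as well; axioms (L3), (L3$'$) persist because substituting $\U^{-1}(y)$ for $y$ in (L3) gives $x\ast\U^{-1}(y)=x\ast y$, and similarly for $\D^{-1}$. None of these four use the involutory hypothesis. That hypothesis enters only through (L1), (L1$'$): by part (2), $\U\D(x)=x\ast^{-1}x=\D\U(x)$, and $x\ast^{-1}x=x\ast x$ precisely because $\ast=\ast^{-1}$; therefore $\U^{-1}\D^{-1}(x\ast x)=(\D\U)^{-1}\bigl(\D\U(x)\bigr)=x$ and $\D^{-1}\U^{-1}(x\ast x)=(\U\D)^{-1}\bigl(\U\D(x)\bigr)=x$, which are exactly (L1), (L1$'$) for the new structure.

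I expect the crux to be the bijectivity step in (1). The right-inverse relation $\U\D\circ S=\id_X$ is handed to us by the Legendrian axioms, but the left-inverse identity $S\circ\U\D=\id_X$ is the one genuinely rack-theoretic ingredient, resting on self-distributivity combined with right-cancellation rather than on the axioms (L1)--(L3$'$) alone; once this is in place, everything else is a short combination of the defining relations.
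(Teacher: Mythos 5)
Your proof is correct, and parts (2), (3) and (4) follow essentially the same route as the paper: the only cosmetic difference there is that you derive (2) from (L1) via (L2), (L2$'$) applied with $y=x$, whereas the paper gets it from (L1) via the homomorphism property and (L3), (L3$'$). The genuine divergence is the bijectivity step in (1). The paper obtains surjectivity of $\U$ immediately from (L1) and injectivity in one line from (L1$'$) plus the fact that $\D\U$ is an endomorphism: $x=\D\U(x\ast x)=\D\U(x)\ast\D\U(x)$, so $\U(a)=\U(b)$ forces $a=b$. You instead show that the squaring map $S(x)=x\ast x$ is a two-sided inverse of $\U\D$ and of $\D\U$, deriving the left-inverse identity $S\circ\U\D=\id_X$ from part (2) together with self-distributivity and right-cancellation; this is valid (and packaging the conclusion as ``$S=(\U\D)^{-1}$'' is a tidy structural remark), but your closing claim that this identity genuinely rests on self-distributivity rather than on (L1)--(L3$'$) is overstated. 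Once $\U$ and $\D$ are known to be endomorphisms --- which you establish first --- axiom (L1) alone gives $x=\U\D(x\ast x)=\U\D(x)\ast\U\D(x)=S(\U\D(x))$, which is the same identity in one line; so the crux you single out has a shorter proof already implicit in your own endomorphism step, and it is this shorter version that the paper uses.
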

	
	\begin{proof}
		Axioms (L2) and (L3) imply that  $\U$ is an endomorphism of $(X, *)$. Axiom (L1) immediately gives surjectivity of $\U$. Finally, axiom (L1$^{\prime}$) together with $\U$ being a homomorphism implies that $\U$ is injective as well. A similar argument shows that $\D$ is also an automorphism of $(X, *)$, which proves assertion (1).
		\par
		Axiom (L1) together with the fact that $\U$ and $\D$ are homomorphisms gives $\U\D(x) \ast \U\D(x) =x$. Now, axioms (L3) and (L3$^\prime$) gives $\U\D(x) \ast x =x$. Similarly, one obtains  $\D\U(x) \ast x =x$,  proving assertion (2), which further yields assertion (3) immdediately.
		\par
		Applying $\U^{-1}$ to $\U(x \ast y)= \U(x) \ast y $ and $x  \ast \U(y)= x \ast y$ shows that $\U^{-1}$ satisfies (L3) and (L2), respectively. Similarly, $\D^{-1}$ satisfies (L3$^\prime$) and (L2$^\prime$). 
		Assertion (2) gives $\D\U(x) \ast x=x$, which implies that $x =\U^{-1}\D^{-1}(x) \ast^{-1} \U^{-1}\D^{-1}(x)$. Since $(X, *)$ is involutory, this gives $x =\U^{-1}\D^{-1}(x) \ast \U^{-1}\D^{-1}(x)$. Further, since 
		$\D$ and $\U$ are homomorphisms, we obtain $x =\U^{-1}\D^{-1}(x \ast x)$. A similar argument yields  $x =\D^{-1}\U^{-1}(x \ast x)$, which establishes assertion (4).
	\end{proof}
	
	\begin{remark}
		Taking $\U=\D$, we obtain the Legendrian rack considered in \cite[Definition 4]{MR4292403}. Furthermore, Proposition \ref{u,d an auto}(2) shows that for $\U=\D$, the definition considered in \cite[Definition 4]{MR4292403} has one redundant axiom.
	\end{remark}
	
	Let $(X, \ast, \U, \D)$ and $(Y, \star, \U^\prime,\D^\prime)$ be two $\GL$-racks. Then, a {\it $\GL$-rack homomorphism} $\phi:(X, \ast, \U, \D) \to (Y, \star, \U^\prime,\D^\prime)$ is a map $\phi:X \to Y$ such that the following hold:
	\begin{enumerate}
		\item $\phi(x\ast y)=\phi(x) \star \phi(y)$.
		\item $\phi ~\U= \U^\prime~\phi $.
		\item $\phi ~\D= \D^\prime~\phi $.
	\end{enumerate}
	
	Recall that, for each $y \in X$, the inner automorphism $S_y:X \to X$ given by $S_y(x)= x*y$ is a rack automorphism of $(X,*)$. Furthermore, axiom (L2) and (L2$^\prime$) imply that $S_y$ is a $\GL$-rack automorphism of $(X, \ast, \U,\D)$ for each $y\in X$. It is easy to see that the set of all $\GL$-racks together with their morphisms forms a subcategory of the category of racks.
	\par
	
	\begin{example}
		Let us look at some examples.
		\begin{enumerate}
			\item If $(X, *)$ is a quandle, then $(X, \ast, \id_X,\id_X)$  is a $\GL$-rack. We refer to such racks as {\it trivial $\GL$-racks}.
			\item Let $G$ be a group and $u, v, w \in G$ three commuting elements such that $uvw =1$. Then the binary operation $x*y=yu y^{-1} x$ ~gives a rack structure on $G$. Further, the maps $\U, \D:G \to G$ given by $\U(x)= xv$ and $\D(x)=xw$ gives a $\GL$-structure on $(X, *)$.
		\end{enumerate}
	\end{example}
	
	We shall see in Section \ref{generalised Legendrian rack of link} that oriented Legendrian links naturally give  rise to $\GL$-racks. The reader may refer to \cite{MR4292403} for more examples of finite type with $\U=\D$. Let us construct another family of examples. Recall that a \textit{permutation rack} $(X, *_\sigma)$ is the rack structure on a set $X$ given by  $x *_\sigma y = \sigma(x)$, where $\sigma$ is a fixed permutation of $X$. Clearly, a permutation rack $(X, *_\sigma)$ is a quandle if and only if $\sigma= \id_X$.
	
	\begin{prop} \label{Leg perm rack}
		Let $X$ be a set and $\sigma$ a permutation of $X$. Then, a pair $(\U,\D)$ of maps is a $\GL$-structure on the permutation rack $(X, *_\sigma)$ if and only if $\D\U=\sigma^{-1}.$
	\end{prop}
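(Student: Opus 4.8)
The plan is to evaluate each of the six $\GL$-rack axioms in the permutation rack, where the operation $x *_\sigma y = \sigma(x)$ ignores its second argument, and thereby translate the entire structure into a short list of relations among $\U$, $\D$ and $\sigma$. Substituting $x *_\sigma y = \sigma(x)$, I would first observe that axioms (L3) and (L3$'$) read $\sigma(x) = \sigma(x)$ and hence hold for \emph{every} pair $(\U,\D)$, imposing no restriction whatsoever. Axioms (L2) and (L2$'$) become $\U\sigma = \sigma\U$ and $\D\sigma = \sigma\D$, i.e. both maps commute with $\sigma$. Finally, since $x *_\sigma x = \sigma(x)$, axiom (L1) reads $\U\D\sigma = \id_X$ and (L1$'$) reads $\D\U\sigma = \id_X$; as $\sigma$ is invertible, these are precisely $\U\D = \sigma^{-1}$ and $\D\U = \sigma^{-1}$.

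With this dictionary the forward implication is immediate: if $(\U,\D)$ is a $\GL$-structure, then in particular axiom (L1$'$) holds, and this is exactly the relation $\D\U = \sigma^{-1}$.

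For the converse I would assume $\D\U = \sigma^{-1}$ and try to recover the remaining nontrivial conditions, namely (L1) $\U\D = \sigma^{-1}$, (L2) $\U\sigma = \sigma\U$, and (L2$'$) $\D\sigma = \sigma\D$, since (L1$'$) is the hypothesis itself and (L3), (L3$'$) are automatic. As $\D\U = \sigma^{-1}$ is a bijection, $\U$ is injective and $\D$ surjective, and on a finite set both are bijections; writing $\sigma = (\D\U)^{-1} = \U^{-1}\D^{-1}$, one checks that $\U\sigma = \sigma\U$ is equivalent to $\U$ and $\D$ commuting, and similarly for $\D$, so the whole converse reduces to the single point that $\D\U = \sigma^{-1}$ forces $\U$ and $\D$ to commute (equivalently, forces $\U\D = \sigma^{-1}$ as well). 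I expect this symmetry step to be the main obstacle: one must argue that the prescribed way in which $\U$ and $\D$ compose to $\sigma^{-1}$ is insensitive to their order, so that fixing $\D\U$ already determines $\U\D$ and yields commutation with $\sigma$. Establishing this, rather than the routine axiom-by-axiom translation, is where the real content of the proposition lies, and it is the part I would scrutinise most carefully.
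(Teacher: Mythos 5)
Your translation of the axioms is correct and is, in substance, all the paper itself offers: its entire proof is the single sentence that the assertion is an immediate consequence of the definition. As you say, (L3) and (L3$'$) are vacuous for $x *_\sigma y=\sigma(x)$, (L2) and (L2$'$) become $\U\sigma=\sigma\U$ and $\D\sigma=\sigma\D$, and (L1), (L1$'$) become $\U\D=\sigma^{-1}$ and $\D\U=\sigma^{-1}$. The forward implication is therefore fine.

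The converse is where your proposal stops, and the step you defer to the end --- that $\D\U=\sigma^{-1}$ alone forces $\U\D=\sigma^{-1}$ and the commutation relations --- is not merely the main obstacle: it is false, so no argument can close this gap as the proposition is literally stated. Take $X=\{1,2,3\}$, $\U=(1\,2)$, $\D=(1\,3)$ and $\sigma=(\D\U)^{-1}=(1\,3\,2)$; then $\D\U=\sigma^{-1}$, but $\U\D=(1\,3\,2)=\sigma\neq\sigma^{-1}$, so (L1) fails, and $\U\sigma(1)=3\neq 1=\sigma\U(1)$, so (L2) fails as well. On an infinite set one can even lose bijectivity: $\sigma=\id_{\mathbb{Z}}$, $\U(n)=2n$, $\D(n)=\lfloor n/2\rfloor$ satisfy $\D\U=\sigma^{-1}$ but violate (L1). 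The correct characterisation is the two-sided condition $\U\D=\D\U=\sigma^{-1}$, from which (L2) and (L2$'$) do follow exactly as you compute ($\U\sigma^{-1}=\U\D\U=\sigma^{-1}\U$, and symmetrically for $\D$); with that amendment your argument is complete. Nothing downstream is damaged --- Corollary \ref{cubic permutation rack} only ever uses pairs satisfying the two-sided condition --- but your instinct to scrutinise precisely this symmetry step was right, and the paper's one-line proof glosses over exactly the point you isolated.
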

	
	\begin{proof}
		The assertion is an immediate consequence of the definition of a $\GL$-rack. 
	\end{proof}
	
	\begin{corollary}\label{cubic permutation rack}
		Let $(X, *_\sigma)$ be a permutation rack. Then, the pairs $(\id_X, \sigma^{-1})$ and  $(\sigma^{-1}, \id_X)$ are $\GL$-structures on $(X, *_\sigma)$. Further, if $\sigma^3 = \id_X$, then the pair $(\sigma,\sigma )$ is also a  $\GL$-structures on $(X, *_\sigma)$.
	\end{corollary}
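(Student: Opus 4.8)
The plan is to invoke Proposition \ref{Leg perm rack}, which reduces the problem to a single composition identity: a pair $(\U,\D)$ is a $\GL$-structure on $(X,*_\sigma)$ precisely when $\D\U=\sigma^{-1}$. Thus for each of the three candidate pairs I would simply compute the composite $\D\U$ and check that it equals $\sigma^{-1}$; there is nothing further to verify.

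First, for the pair $(\id_X,\sigma^{-1})$ one has $\U=\id_X$ and $\D=\sigma^{-1}$, so $\D\U=\sigma^{-1}\,\id_X=\sigma^{-1}$, as required. Symmetrically, for $(\sigma^{-1},\id_X)$ one has $\U=\sigma^{-1}$ and $\D=\id_X$, giving $\D\U=\id_X\,\sigma^{-1}=\sigma^{-1}$. In both of these cases the identity is immediate and no hypothesis on the order of $\sigma$ is needed.

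The only case carrying any content is the third. For the pair $(\sigma,\sigma)$ I would compute $\D\U=\sigma\circ\sigma=\sigma^2$, so that the criterion $\D\U=\sigma^{-1}$ becomes $\sigma^2=\sigma^{-1}$, which is equivalent to $\sigma^3=\id_X$. Hence under the standing hypothesis $\sigma^3=\id_X$ the pair $(\sigma,\sigma)$ meets the condition of Proposition \ref{Leg perm rack} and is therefore a $\GL$-structure. I expect no genuine obstacle here: the whole argument is a direct verification of the composition identity supplied by the proposition, and the cube hypothesis is exactly what converts $\sigma^2$ into $\sigma^{-1}$ in this last case.
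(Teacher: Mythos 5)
Your proof is correct and is exactly the paper's (implicit) argument: the corollary is read off from Proposition \ref{Leg perm rack} by computing $\D\U$ for each candidate pair, with the hypothesis $\sigma^3=\id_X$ entering only to convert $\sigma^2$ into $\sigma^{-1}$ in the third case. Nothing further is needed.
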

	
	The preceding corollary provides us a large supply of non-trivial $\GL$-racks. It is a natural question to determine all possible $\GL$-structures on a given rack. In the case when $\D=\U$, we have the following observation for finite permutation racks.
	
	\begin{prop}
		Let $X$ be a finite set with more than one element and $\sigma$ a permutation of $X$. Suppose that there exists an even integer $k$ such that $\sigma$ is a product of odd number of $k$-cycles. Then the permutation rack $(X,*_\sigma)$ does not admit any $\GL$-structure of the form $\U=\D$.
	\end{prop}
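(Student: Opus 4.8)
The plan is to reduce the statement to a question about the existence of a square root of a permutation and then invoke the classical parity obstruction for such square roots. First I would record that by Proposition \ref{u,d an auto}(1) every map occurring in a $\GL$-structure is a bijection, so necessarily $\U \in \Sym(X)$. A $\GL$-structure of the form $\U = \D$ satisfies $\D\U = \U^2$, and by Proposition \ref{Leg perm rack} this composite must equal $\sigma^{-1}$. Hence such a structure exists if and only if there is some $\U \in \Sym(X)$ with $\U^2 = \sigma^{-1}$, that is, if and only if $\sigma^{-1}$ admits a square root in $\Sym(X)$. Since a permutation and its inverse share the same cycle type, it is equivalent to decide whether $\sigma$ itself has a square root.

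The key step, and the part I expect to be the main obstacle, is establishing the parity obstruction, which I would prove by analysing how squaring acts on disjoint cycles. For $\rho \in \Sym(X)$, a cycle of odd length $m$ in $\rho$ squares to a single cycle of length $m$, whereas a cycle of even length $2\ell$ in $\rho$ squares to \emph{two} disjoint cycles of length $\ell$. Consequently, for any fixed even integer $\ell'$, a cycle of length $\ell'$ in $\rho^2$ cannot come from an odd-length cycle of $\rho$ (which preserves its odd length), so it must arise from splitting a cycle of length $2\ell'$ in $\rho$. As these split cycles are produced in pairs, the number of cycles of length $\ell'$ in $\rho^2$ is even for every even $\ell'$.

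Finally I would apply this to $\pi = \sigma^{-1}$. By hypothesis $\sigma$, and therefore $\sigma^{-1}$, is a product of an odd number of $k$-cycles with $k$ even, so in particular the number of $k$-cycles of $\sigma^{-1}$ is odd. If $\sigma^{-1} = \U^2$ were a square, then the preceding paragraph applied with $\ell' = k$ would force this count to be even, a contradiction. Hence $\sigma^{-1}$ has no square root in $\Sym(X)$, and so $(X, *_\sigma)$ admits no $\GL$-structure with $\U = \D$. The hypothesis that $X$ has more than one element is only needed to guarantee that $\sigma$ is nontrivial, so that a genuine $k$-cycle is indeed present; once the cycle-splitting analysis is in place, the conclusion is immediate.
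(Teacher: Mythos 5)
Your proposal is correct and follows essentially the same route as the paper: both reduce the question, via Proposition \ref{Leg perm rack}, to whether $\sigma$ (equivalently $\sigma^{-1}$) is a perfect square in $\Sym(X)$, and both invoke the parity obstruction that a square has an even number of cycles of each even length. The only difference is that you supply the cycle-splitting argument proving (the needed direction of) that classical criterion, which the paper states without proof.
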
 	
	
	\begin{proof}
		Note that a permutation $\rho$ of $X$ is a perfect square in the symmetric group on $X$ if and only if, for every even integer $k$, the cycle decomposition of $\rho$ has an even number of $k$-cycles. Hence, the given permutation $\sigma$  cannot be a perfect square. Now, if $(\U, \D)$ is a $\GL$-structure on the permutation rack $(X,*_\sigma)$ with  $\U=\D$, then Proposition \ref{Leg perm rack} gives $\sigma=\U^{-2}$, a contradiction. Hence, $(X,*_\sigma)$ does not admit any $\GL$-structure of the form $\U=\D$.
	\end{proof}		
	
	\begin{remark}
		We note that there is a curious similarity between defining axioms of Legendrian racks considered in \cite[Definition 4]{MR4292403} and quandles with good involutions considered in \cite[Definition 2.1]{MR2657689}. The latter objects have been introduced for coloring unoriented links, and a cohomology theory has also been developed for the same.
	\end{remark}

We conclude this section by recording  some basic categorical aspects of this theory. Let  $\GLcat$ denote the category of $\GL$-racks. Its objects are the $\GL$-racks and the morphisms are as defined above. It is clear that finite direct products  and equalisers exist in $\GLcat$, where the latter are simply the set-theoretic equalisers with the induced $\ast$, $\U_X$ and $\D_X$. Since $\GLcat$ has all binary products and equalisers, they admit all finite limits \cite[Proposition 2.8.2]{MR1291599}.
\par

Let $\GLcat|_X$ denote the slice category over a fixed $\GL$-rack $(X,*,\U_X,\D_X)$. The objects in $\GLcat|_{X}$ are the $\GL$-rack homomorphisms $f:(Y,\star, \U_Y,\D_Y) \rightarrow (X,*,\U_X,\D_X)$, and a morphism from $f:(Y,\star, \U_Y,\D_Y) \rightarrow (X,*,\U_X,\D_X)$ to $g:(Z,\triangleright,\U_Z,\D_Z) \rightarrow (X,*,\U_X,\D_X)$ is a $\GL$-rack homomorphism $h:(Y,\star, \U_Y,\D_Y) \rightarrow (Z,\triangleright,\U_Z,\D_Z)$ such that $g \,h=f$. Since $\GLcat$ has all finite limits, it follows that $\GLcat|_{X}$ also has  finite limits. In particular, the product of $f:(Y,\star, \U_Y,\D_Y) \rightarrow (X,*,\U_X,\D_X)$ and $g:(Z,\triangleright,\U_Z,\D_Z) \rightarrow (X,*,\U_X,\D_X)$ in $\GLcat|_X$ is the pullback $$h:(Y \times_X Z, \bar{\star },\U,\D) \rightarrow (X,*,\U_X,\D_X)$$ of $f$ and $g$, where $Y \times_X Z=\{(y,z) \in Y \times Z \mid f(y)=g(z)\}$ has the product rack structure given by $(y,z) \bar{\star } (y', z')= (y \star y', z\triangleright z')$, $\U(y,z)=(\U_Y(y),\U_Z(z))$, $\D(y,z)=(\D_Y(y),\D_Z(z))$ and $h(y, z)= f(y)=g(z)$ for all $(y,z) \in Y \times_X Z$. Thus, we can consider group objects in $\GLcat|_{X}$.
\medskip

\section{generalised Legendrian rack of a Legendrian link}\label{generalised Legendrian rack of link}
Analogous to the fundamental quandle of an oriented topological link, we define the $\GL$-rack of an oriented Legendrian link in the following manner. We first consider the free $\GL$-rack on a set in the spirit of universal algebra (See \cite[Chapter II, Section 10]{MR0648287}).
	\par
	
	Let $X$ be a non-empty set. We define the \textit{universe of words} generated by $X$ to be the set $W(X)$ satisfying  the following:
	\begin{enumerate}
		\item $x \in W(X)$ for every $x \in X$,
		\item $x\ast y,~ x\ast^{-1} y,~ \U(x),\D(x) \in W(X)$ for every $x,y \in W(X)$.
	\end{enumerate}
	
	\par
	Let $FGLR(X)$ be the set of equivalence classes of elements of $W(X)$ modulo the equivalence relation generated by the following relations:
	\begin{enumerate}
		\item $(x \ast y) \ast^{-1} y \sim x \sim (x \ast^{-1} y) \ast y$,
		\item $(x \ast y) \ast z \sim (x \ast z) \ast (y \ast z)$,
		\item $ \U(\D(x \ast x)) \sim x \sim  \D(\U(x \ast x))$,
		\item $ \U(x \ast y) \sim \U(x) \ast y $,
		\item$ \D(x \ast y)\sim \D(x) \ast y $,
		\item$x  \ast \U(y) \sim x \ast y$,
		\item $x  \ast \D(y) \sim x \ast y$,
	\end{enumerate}
	for all $x,y,z \in W(X)$. In particular, we have maps $\U,\D: FGLR(X) \to FGLR(X)$ given by $x \mapsto \U(x)$ and $x \mapsto \D(x)$. The defining conditions show that  $(FGLR(X), *, \U,\D)$ is a $\GL$-rack, which we refer as the {\it free $\GL$-rack} on $X$.
	\par
	
	\begin{prop}\label{wordsinfreeGLR}
		An element of $FGLR(X)$ has the form $$\U^k\D^l((\cdots((x_1*^{\epsilon_1} x_2)*^{\epsilon_2} x_3) \cdots )*^{\epsilon_{r-1}}x_r)$$ for some $x_i\in X$, $\epsilon_i \in \{1, -1 \}$ and integers $k,l, r$, where $r\ge 1$.
	\end{prop}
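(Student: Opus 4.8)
The plan is to prove the assertion by structural induction on the way words in $W(X)$ are built, reducing each word to the claimed \emph{normal form}
$$\U^k\D^l((\cdots((x_1\ast^{\epsilon_1} x_2)\ast^{\epsilon_2} x_3) \cdots )\ast^{\epsilon_{r-1}}x_r).$$
Before starting the induction, I would record a few consequences of the defining relations of $FGLR(X)$ that get used repeatedly. First, by Proposition \ref{u,d an auto}(3) the maps $\U$ and $\D$ commute on $FGLR(X)$, so any expression to which several $\U$'s and $\D$'s have been applied can be rewritten with all of them collected as a single outer block $\U^k\D^l$. Second, relation (4) gives $\U(x\ast y)\sim\U(x)\ast y$, and substituting $x\ast^{-1}y$ for $x$ and using relation (1) yields $\U(x\ast^{-1}y)\sim\U(x)\ast^{-1}y$; the same holds for $\D$ by relation (5). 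Third, relation (6) says $x\ast\U(y)\sim x\ast y$, so right multiplication by $\U(y)$ and by $y$ agree; since right multiplications are invertible by relation (1), their inverses also agree, giving $x\ast^{-1}\U(y)\sim x\ast^{-1}y$, and likewise for $\D$ via relation (7). Finally, from self-distributivity (relation (2)) and invertibility (relation (1)) one derives the standard rack identities
$$x\ast(y\ast z)\sim((x\ast^{-1}z)\ast y)\ast z,\qquad x\ast(y\ast^{-1}z)\sim((x\ast z)\ast y)\ast^{-1}z,$$
together with the two analogous identities for $x\ast^{-1}(y\ast^{\pm1}z)$, obtained by inverting the right-multiplication operator $S_z$.

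The base case is a generator $x\in X$, which is already in normal form with $k=l=0$ and $r=1$. For the inductive step, assume $a,b\in W(X)$ are equivalent to normal-form words $\U^{k_1}\D^{l_1}(p)$ and $\U^{k_2}\D^{l_2}(q)$, where $p,q$ are left-associated products of generators. If the word is $\U(a)$ or $\D(a)$, I just absorb the new symbol into the outer block using the commutativity of $\U$ and $\D$, which keeps it in normal form. If the word is $a\ast b$, I first apply relations (6) and (7) (and their $\ast^{-1}$ versions above) repeatedly to strip every $\U$ and $\D$ off the second factor, obtaining $a\ast b\sim\U^{k_1}\D^{l_1}(p)\ast q$; then I apply relations (4) and (5) in the direction $\U(x)\ast y\sim\U(x\ast y)$ and $\D(x)\ast y\sim\D(x\ast y)$ to pull the whole block $\U^{k_1}\D^{l_1}$ to the outside, so that $a\ast b\sim\U^{k_1}\D^{l_1}(p\ast q)$. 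The case $a\ast^{-1}b$ is identical using the $\ast^{-1}$ variants, yielding $a\ast^{-1}b\sim\U^{k_1}\D^{l_1}(p\ast^{-1}q)$.

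It remains to show that the product of two left-associated generator words is again left-associated, which is the only real content of the argument. I would isolate this as a sub-claim: if $p$ and $q$ are left-associated products of generators, then $p\ast q$ and $p\ast^{-1}q$ are each equivalent to a left-associated product of generators. This is proved by a secondary induction on the number $s$ of generators in $q$. If $s=1$ then $q$ is a single generator and $p\ast^{\pm1}q$ merely appends one generator to $p$, hence is left-associated. If $q=c\ast^{\delta}y_s$ with $c$ left-associated of length $s-1$ and $y_s\in X$, the conjugation identities above give
$$p\ast q\;\sim\;((p\ast^{-\delta}y_s)\ast c)\ast^{\delta}y_s;$$
here $p\ast^{-\delta}y_s$ is again left-associated, so by the inductive hypothesis applied with right factor $c$ the inner product $(p\ast^{-\delta}y_s)\ast c$ reduces to a left-associated word $w$, whence $p\ast q\sim w\ast^{\delta}y_s$ is left-associated; the same computation with the $\ast^{-1}$ identities handles $p\ast^{-1}q$. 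Feeding this back into the previous paragraph shows that $a\ast^{\pm1}b$ is equivalent to a normal-form word, completing the induction. The main obstacle is exactly this sub-claim: keeping the double induction well-founded and tracking the signs $\delta$ correctly through the conjugation identities, even though each individual rewriting is a routine consequence of the rack axioms. Finally, since words in $W(X)$ only ever apply $\U$ and $\D$ and never their inverses, the exponents $k,l$ produced are in fact non-negative.
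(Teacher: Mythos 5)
Your argument is correct and follows essentially the same route as the paper: reduce to a left-associated word and then use axioms (L2), (L2$'$), (L3) and (L3$'$) to collect all occurrences of $\U$ and $\D$ into a single outer block. The only difference is that you prove the left-associated normal form from scratch via the double induction with the conjugation identities, whereas the paper simply invokes it (it is \cite[Lemma 4.4.7]{MR2634013}, cited earlier in the paper's notational conventions).
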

	
	\begin{proof}
		It follows from the construction of $FGLR(X)$ and the fact that $\U,\D$ are rack homomorphisms (Proposition \ref{u,d an auto}(1)) that any element of $FGLR(X)$ has a left associated form
		$$
		(\cdots((\U^{n_1}\D^{m_1}(x_1)*^{\epsilon_1} \U^{n_2}\D^{m_2}(x_2))*^{\epsilon_2}  \U^{n_3}\D^{m_3}(x_3) \cdots )*^{\epsilon_{r-1}}\U^{n_r}\D^{m_r}(x_r)).
		$$
		for some $x_i \in X$, $\epsilon_i \in \{1,-1\}$ and integers $n_i,m_i$ and $r$ with $r\ge 1$. Repeated use of axioms (L2), (L2$'$), (L3) and (L3$'$) imply that the preceding expression can be written in the form
		$$
		\U^k\D^l((\cdots((x_1*^{\epsilon_1} x_2)*^{\epsilon_2} x_3) \cdots )*^{\epsilon_{r-1}}x_r)
		$$
		for some integers $k,l$.
	\end{proof}
	
	We now prove the universal property for free $\GL$-racks.
	
	\begin{prop}
		Let $X$ be a non-empty set. Then, for any $\GL$-rack $(L, \star, \U',\D')$ and a set map $\phi :X\rightarrow L$, there exists a unique $\GL$-rack homomorphism $$\tilde{\phi}:(FGLR(X),*, \U,\D)\rightarrow(L, \star, \U',\D')$$ such that the following diagram commutes
		$$
		\begin{tikzcd}
			X  \arrow{d}{\phi} \arrow[rightarrow]{r}{i} & FGLR(X) \arrow[dashed]{ld}{\tilde{\phi}} \\ L.
		\end{tikzcd}
		$$
		Furthermore, the natural map $i$ is injective.
	\end{prop}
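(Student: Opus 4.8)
The plan is to build $\tilde{\phi}$ in the standard way for free algebras: first define a map on the universe of words $W(X)$, and then show it is constant on equivalence classes. First I would define $\Phi : W(X) \to L$ by recursion on the construction of words, setting $\Phi(x) = \phi(x)$ for $x \in X$ and extending by
$\Phi(a \ast b) = \Phi(a) \star \Phi(b)$, $\Phi(a \ast^{-1} b) = \Phi(a) \star^{-1} \Phi(b)$, $\Phi(\U(a)) = \U'(\Phi(a))$ and $\Phi(\D(a)) = \D'(\Phi(a))$ for all $a, b \in W(X)$. Here $\star^{-1}$ denotes the dual operation in $L$, which exists because $(L, \star)$ is a rack. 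Since $W(X)$ is the absolutely free algebra on $X$ for the operations $\ast, \ast^{-1}, \U, \D$, this recursion determines $\Phi$ uniquely.

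The key step, and the main obstacle, will be to verify that $\Phi$ descends to $FGLR(X)$, that is, $\Phi(a) = \Phi(b)$ whenever $a \sim b$. The relation $\sim$ is the smallest congruence on $W(X)$ containing the seven generating relations, while the kernel congruence $\{(a,b) : \Phi(a) = \Phi(b)\}$ is automatically a congruence because $\Phi$ respects all four operations by construction. Hence it suffices to check that $\Phi$ identifies the two sides of each of the seven generating relations; each such identity is precisely the image under $\Phi$ of the corresponding $\GL$-rack axiom holding in $L$. For instance, relation (3) becomes $\U'\D'(\Phi(x) \star \Phi(x)) = \Phi(x) = \D'\U'(\Phi(x) \star \Phi(x))$, which is exactly axioms (L1) and (L1$'$) in $L$; relations (4)--(7) correspond to (L2), (L2$'$), (L3), (L3$'$); and relations (1), (2) hold because $(L, \star)$ is a rack. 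Once this is done, $\Phi$ factors as $\Phi = \tilde{\phi} \circ q$ through the quotient map $q : W(X) \to FGLR(X)$, and the defining formulas show that $\tilde{\phi}$ commutes with $\ast, \U, \D$; thus $\tilde{\phi}$ is a $\GL$-rack homomorphism with $\tilde{\phi} \circ i = \phi$.

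For uniqueness, suppose $\psi : FGLR(X) \to L$ is any $\GL$-rack homomorphism with $\psi \circ i = \phi$. By Proposition \ref{wordsinfreeGLR}, every element of $FGLR(X)$ is obtained from elements of $X$ by finitely many applications of $\ast^{\pm 1}, \U, \D$. Since $\psi$ preserves all of these operations and agrees with $\tilde{\phi}$ on $X$, an induction on the number of operations appearing in the normal form of an element yields $\psi = \tilde{\phi}$.

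Finally, injectivity of $i$ follows from the universal property itself by a standard argument. I would equip $X$ with the trivial quandle operation $x \ast x' = x$ and set $\U = \D = \id_X$, which makes $(X, \ast, \id_X, \id_X)$ a (trivial) $\GL$-rack. Applying the existence part to the map $\phi = \id_X : X \to X$ produces a homomorphism $\tilde{\phi}$ with $\tilde{\phi} \circ i = \id_X$; since $\id_X$ is injective, so is $i$.
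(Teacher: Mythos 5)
Your proof is correct and follows essentially the same route as the paper: define the map on the universe of words $W(X)$, check that it identifies the two sides of each of the seven generating relations (which are exactly the rack and $\GL$-rack identities holding in $(L,\star,\U',\D')$), and then descend to the quotient, with uniqueness forced by the fact that $FGLR(X)$ is generated by $i(X)$ under the operations. The one genuine addition is your argument for injectivity of $i$, which the paper merely asserts: retracting onto the trivial $\GL$-rack $(X,\ast,\id_X,\id_X)$ via $\phi=\id_X$ is a valid and clean way to conclude, since the trivial quandle operation with identity Legendrian structure does satisfy all six axioms.
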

	
	\begin{proof}
		Let $\phi:X\rightarrow L$ be any set map. By Proposition \ref{wordsinfreeGLR}, any element $w$ of $FGLR(X)$ has the form 
		$$w=\U^k\D^l((\cdots((x_1*^{\epsilon_1} x_2)*^{\epsilon_2} x_3) \cdots )*^{\epsilon_{r-1}}x_r)$$
		for some $x_i\in X$, $\epsilon_i \in \{1, -1 \}$ and non-negative integers $k, l, r$. We extend the map $\phi$ to $\tilde{\phi}:W(X) \rightarrow L$ by setting 
		$$\tilde{\phi}(w)=(\U')^k(\D')^l ((\cdots((\phi(x_1)\star^{\epsilon_1} \phi(x_2))\star^{\epsilon_2} \phi(x_3)) \cdots )\star^{\epsilon_{r-1}}\phi(x_r))).$$
		For all $a, b, c\in W(X)$, we have
		\begin{small}
			\begin{itemize}
				\item[] $\tilde{\phi}((a\ast b) \ast^{-1} b) = (\tilde{\phi}(a) \star \tilde{\phi}(b)) \star^{-1} \tilde{\phi}(b) = \tilde{\phi}(a)$,
				\item[] $\tilde{\phi}((a \ast b) \ast c) = (\tilde{\phi}(a) \star \tilde{\phi}(b)) \star \tilde{\phi}(c)= (\tilde{\phi}(a) \star \tilde{\phi}(c)) \star (\tilde{\phi}(b) \star \tilde{\phi}(c))=\tilde{\phi}((a *c)*(b *c))$,
				\item[] $\tilde{\phi}(\U(d(a \ast a))=\U'(\D'(\tilde{\phi}(a \ast a))) =\U'(\D'(\tilde{\phi}(a) \star \tilde{\phi}(a))) =\tilde{\phi}(a)$,
				\item[] $\tilde{\phi}(\D(\U(a \ast a))=\D'(\U'(\tilde{\phi}(a \ast a))) =\D'(\U'(\tilde{\phi}(a) \star \tilde{\phi}(a))) =\tilde{\phi}(a)$,
				\item[] $\tilde{\phi}(\U(a \ast b)) =\U'(\tilde{\phi}(a \ast b))=\U'(\tilde{\phi}(a) \star \tilde{\phi}(b))=\U'(\tilde{\phi}(a)) \star \tilde{\phi}(b)= \tilde{\phi}(\U(a))\star  \tilde{\phi}(b)=\tilde{\phi}(\U(a)\ast b)$,
				\item[] $\tilde{\phi}(\D(a \ast b)) =\D'(\tilde{\phi}(a \ast b))=\D'(\tilde{\phi}(a) \star \tilde{\phi}(b))=\D'(\tilde{\phi}(a)) \star \tilde{\phi}(b)= \tilde{\phi}(\D(a))\star  \tilde{\phi}(b)=\tilde{\phi}(\D(a)\ast b)$,
				\item[] $\tilde{\phi}(a \ast \U(b)) =\tilde{\phi}(a) \star \tilde{\phi}(\U(b))=\tilde{\phi}(a) \star \U'(\tilde{\phi}(b))=\tilde{\phi}(a) \star \tilde{\phi}(b)= \tilde{\phi}(a\ast b)$,
				\item[] $\tilde{\phi}(a \ast \D(b)) =\tilde{\phi}(a) \star \tilde{\phi}(\D(b))=\tilde{\phi}(a) \star \D'(\tilde{\phi}(b))=\tilde{\phi}(a) \star \tilde{\phi}(b)= \tilde{\phi}(a\ast b)$.
			\end{itemize}
		\end{small}
		Thus, $\tilde{\phi}$ descends to a rack homomorphism $\tilde{\phi}:FGLR(X) \rightarrow L$ such that $\U' ~\tilde{\phi}=\tilde{\phi}~\U$ and $\D' ~\tilde{\phi}=\tilde{\phi}~\D$. Clearly, $\tilde{\phi}$ is the unique such morphism and the natural map $i$ is injective, which completes the proof.
	\end{proof}
	
	\textbf{Notation.} 
	Let us set some notational conventions for the rest of the paper. 
	\begin{enumerate}
		\item Let $X$ be a non-empty set and $W(X)$ the universe of words generated by $X$ as described above. If $R$ is a given set of relations among elements of $W(X)$, then $\llangle X \mid R \rrangle$ denotes the $\GL$-rack whose elements are equivalence classes of elements of $FGLR(X)$ modulo the equivalence relation generated by $R$.
		\item We write elements of $\GL$-racks as left-associated products. Indeed, it follows from \cite[Lemma 4.4.7]{MR2634013} that any element in a rack can be written in a left-associated form
		\begin{equation*}
			\left(\left(\cdots\left(\left(x_0*^{\epsilon_1}x_1\right)*^{\epsilon_2}x_2\right)*^{\epsilon_3}\cdots\right)*^{\epsilon_{n-1}}x_{n-1}\right)*^{\epsilon_n}x_n,
		\end{equation*}
		which, for simplicity, we write as
		\begin{equation*}
			x_0*^{\epsilon_1}x_1*^{\epsilon_2}\cdots*^{\epsilon_n}x_n.
		\end{equation*}
	\end{enumerate}
	\medskip
	
	We now associate a natural $\GL$-rack to an oriented Legendrian link. Let $K$ be an oriented Legendrian link and $D(K)$ its front diagram with the induced orientation. Let $X= \{x_1,x_2 \ldots ,x_n\}$ be a set with one element for each strand of $D(K)$, where a strand means a connected segment of an arc in the front diagram $D(K)$, which begins and ends at a crossing or a cusp. In other words, a strand does not contain any cusp. At each crossing, we take the usual relation as in the fundamental quandle of the underlying oriented topological link (see Figure \ref{generalised Legendrian rule}(i) and  \ref{generalised Legendrian rule}(ii)), and refer to such a relation as a {\it crossing relation}. The difference arises at cusps. More precisely, if the strand $x$ connects to the strand $y$ at a cusp by moving along the orientation in the positive $z$-direction (see Figure \ref{generalised Legendrian rule}(iii)), then we impose the relation 
	$$y = \U(x).$$
	Similarly, if the strand $x$ connects to the strand $y$ at a cusp by moving along the orientation in the negative $z$-direction (see Figure \ref{generalised Legendrian rule}(iv)), then we impose the relation 
	$$y = \D(x).$$
	We refer to such relations as \textit{cusp relations}. Then, the \textit{$\GL$-rack} $(GLR(D(K)), *, \U,\D)$ associated to the front diagram $D(K)$ is defined to be the set of equivalence classes of elements of $FGLR(X)$ modulo the equivalence relation generated by the crossing and the cusp relations.
	\begin{figure}[h]
		\centering
		\labellist
		\small
		\pinlabel ${x \ast y}$ at 120 130
		\pinlabel {$y$} at -10 130
		\pinlabel {$x$} at -10 10
		\pinlabel {$x$} at 190 10
		\pinlabel {$y$} at 190 130
		\pinlabel {$x \ast y$} at 320 130
		\pinlabel  {$y=\U(x)$} at 450 130
		\pinlabel{$x$} at 450 10
		\pinlabel  {$y=\D(x)$} at 670 10
		\pinlabel{$x$} at 600 130
		\pinlabel {$(i)$} at 50 -25
		\pinlabel {$(ii)$} at 260 -25
		\pinlabel {$(iii)$} at 450 -25
		\pinlabel {$(iv)$} at 640 -25
		\endlabellist
		\includegraphics[width=.8\textwidth]{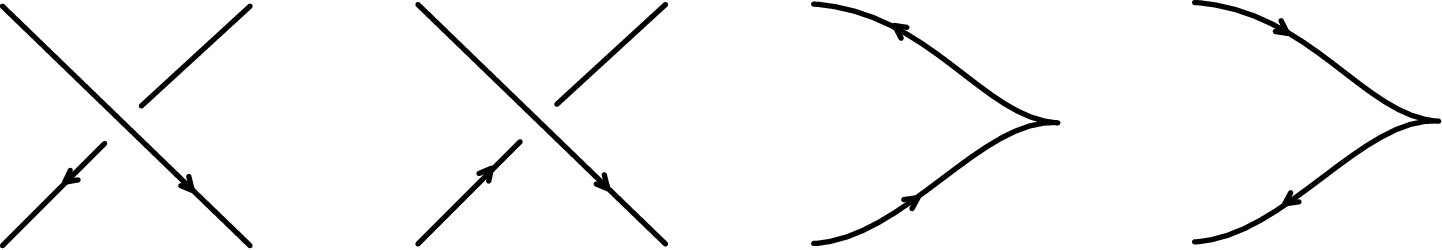}
		\vspace{0.5cm}
		\caption{Relations corresponding to crossings and cusps.}\label{generalised Legendrian rule}
	\end{figure}
	
	\begin{theorem}\label{generalised Legendrian rack invariant}
		The generalised Legendrian rack $(GLR(D(K)), *, \U,\D)$ of a front diagram $D(K)$ of an oriented Legendrian link $K$ is invariant under the Legendrian Reidemeister moves. More precisely, the  generalised Legendrian rack is an invariant of oriented Legendrian links.
	\end{theorem}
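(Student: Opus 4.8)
The plan is to invoke the Legendrian Reidemeister Theorem (Theorem \ref{legendrian reidemeister thm}) to reduce the global statement to a purely local, algebraic check. Planar Legendrian isotopy visibly alters neither the set of strands nor the crossing and cusp relations, and any two front diagrams of Legendrian equivalent links differ by a finite sequence of Legendrian Reidemeister moves together with planar isotopy. Hence it suffices to prove that the $\GL$-rack $\llangle X \mid R \rrangle$ presented by the crossing and cusp relations is unchanged, up to $\GL$-rack isomorphism, under each of the LR moves of Figure \ref{Local Legendrian Reidemeister moves}.

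For each move I would fix the two local front diagrams $D$ and $D'$ that it relates, together with all of their orientation variants, and compare the induced presentations. Strands lying outside the local region are common to both diagrams; inside, a move either splits or merges strands, or introduces a crossing or a cusp pair. I would define a map on generators that is the identity on the unaffected strands and sends each newly created generator to the word in the old generators dictated by the relations, together with the analogous map in the reverse direction. One then verifies, using the defining relations (1)--(7) of $FGLR(X)$ (equivalently axioms (L1)--(L3$'$)), that these assignments respect every local relation on both sides and are mutually inverse, thereby exhibiting a $\GL$-rack isomorphism $GLR(D) \cong GLR(D')$.

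The correspondence between moves and axioms is the heart of the argument. The crossing-only moves (the Legendrian analogues of the second and third Reidemeister moves) are handled exactly as for the fundamental quandle of the underlying topological link, using only the rack axioms, so that the crossing relations stay consistent. The genuinely new content occurs at cusps. Moves that slide a crossing past a cusp are accounted for by (L2) and (L2$'$), which say that $\U$ and $\D$ commute past $\ast$ on the left, and by (L3) and (L3$'$), which say that replacing an overstrand by its cusp image leaves the colour of the understrand unchanged; in each such case one must track both the up/down type of the cusp and the local orientation, invoking the corresponding primed or unprimed axiom. The Legendrian first-Reidemeister-type moves, in which a strand acquires a cusp pair together with a kink, produce precisely the composite relations $\U\D(x \ast x) = x$ and $\D\U(x \ast x) = x$, so their invariance is exactly axioms (L1) and (L1$'$); here the fact that $\U$ and $\D$ are rack automorphisms (Proposition \ref{u,d an auto}(1)) is needed to commute them across the self-crossing.

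The main obstacle I anticipate is organisational rather than conceptual: the number of orientation-and-cusp-type variants of each local move is large, and for every one of them both the well-definedness of the candidate isomorphism and that of its inverse must be checked against the full relation set. Care is required to match each variant with the correct (primed or unprimed) axiom and to ensure that the bijection on generating strands is compatible with the up/down cusp labelling in both directions, since a single misread orientation convention would interchange $\U$ with $\D$ and break the verification. Once the bookkeeping is fixed, each individual check reduces to a short computation with the relations (1)--(7).
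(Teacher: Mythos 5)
Your proposal follows essentially the same route as the paper: reduce via the Legendrian Reidemeister theorem to a case-by-case comparison of local presentations, eliminate the newly created generators by Tietze-type moves, and observe that the residual relations are precisely the $\GL$-rack axioms, with (L1), (L1$^{\prime}$) absorbing the LR1 cusp-pair kinks, (L2), (L2$^{\prime}$), (L3), (L3$^{\prime}$) absorbing the cusp--crossing interactions, and the bare rack axioms handling LR3 exactly as for the fundamental quandle. The one small slip is calling the LR2 moves ``crossing-only'': every Legendrian R2 variant slides a strand past a cusp and so already requires the cusp axioms, but you then treat those cusp--crossing moves correctly, so your outline matches the paper's proof.
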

	
	\begin{proof}
		We divide the proof into three parts, one for each type of oriented Legendrian Reidemeister move.
		\par
		
		\textbf{LR1 moves.} Consider two front diagrams of $K$ which differ only by the first LR1 move. The LHS of Figure \ref{FirstLRI} contributes four generators $x,y,z,w$ and three relations $y=x \ast w$, $z=\U(y)$ and $w=d(z)$ to presentation of the $\GL$-rack of $K$.  An elementary check shows that these relations are equivalent to the relations $y=x \ast x$, $z=\U(x \ast x)$ and  $w=\D(\U(x \ast x))$. Thus, the generators $y,z,w$ from the LHS can be expressed in terms of the  generator $x$ and the Legendrian structure $(\D, \U)$. At the same time, $x$ is the only generator on the RHS of Figure \ref{FirstLRI}. Hence, the resulting $\GL$-racks of both the front diagrams are isomorphic.
		\begin{figure}[h] 
			\centering
			
			\labellist
			\small
			\pinlabel ${z=\U(y)}$ at 160 135
			\pinlabel {$w=\D(z)$} at -10 75
			\pinlabel {$x$} at 25 10
			\pinlabel {$y=x \ast w$} at 170 75
			\pinlabel {$x$} at 460 65
			\pinlabel {$x$} at 350 65
			\endlabellist
			\includegraphics[width=.5\textwidth]{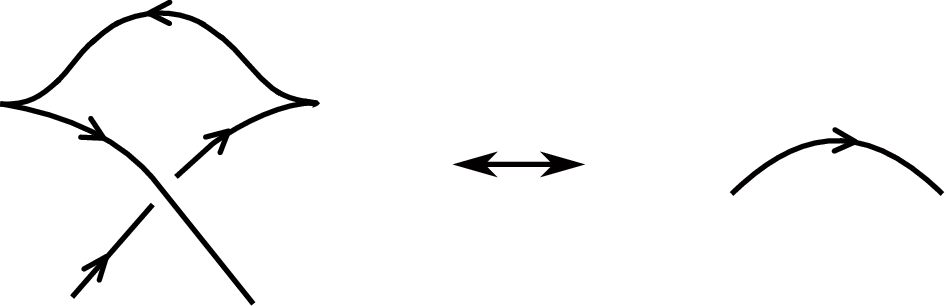}
			\caption{First LR1 move.}\label{FirstLRI}
		\end{figure}
		Similar argument holds for the other LRI moves obtained by changing the orientation and rotating by $180^\circ$ along $x$, $y$ and $z$-axis.
		\par
		\textbf{LR2 moves.} There are 16 second LR2 moves obtained from unoriented Legendrian R2 moves by assigning orientation and rotating $180^{\circ}$ along $x$, $y$ and $z$-axis. 	For the first LR2 move, we see that the LHS of Figure \ref{first LR2} contributes five generators $x,y,a,b,c$ and three relations $a=x \ast y$, $b=\U(x\ast y)$ and $c=b \ast^{-1} y$ to the presentation of the  $\GL$-rack of $K$. These relations are equivalent to the relations $a=x \ast y$, $b=\U(x\ast y)$ and $c=\U(x\ast y) \ast^{-1} y$. That is, the generators $a,b,c$ can be dropped from the presentation of the  $\GL$-rack. The RHS of Figure \ref{first LR2}  contributes three generators $x,y,z$ and one relation $z=\U(x)$. Thus, the  $\GL$-racks obtained from the LHS and RHS are isomorphic.
		\begin{figure}[h] 
			\centering
			\labellist
			\small
			\pinlabel ${c=b \ast^{-1} y}$ at -40 155
			\pinlabel {$y$} at 70 160
			\pinlabel {$x$} at -10 5
			\pinlabel {$b=\U(a)$} at 145 110
			\pinlabel {$a=x \ast y$} at 180 55
			\pinlabel {$z=\U(x)$} at 380 150
			\pinlabel {$x$} at 420 5
			\pinlabel {$y$} at 490 130
			\endlabellist
			\includegraphics[width=.6\textwidth]{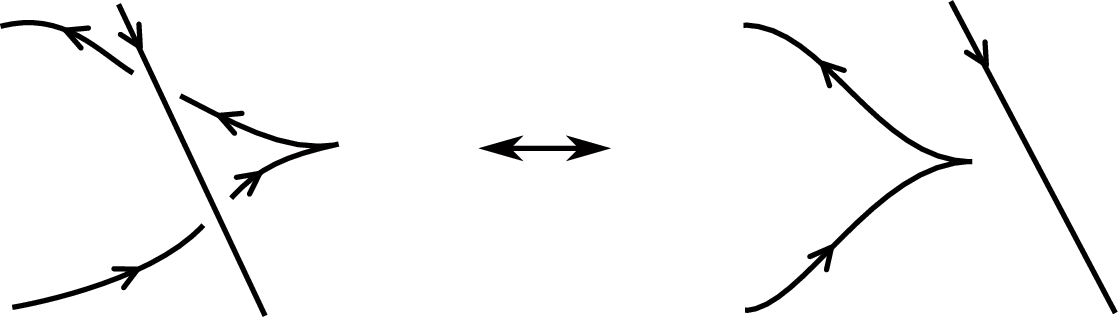}\label{first LR2}
			\caption{First LR2 move.}
			\vspace*{0.5cm}
		\end{figure}
		\par
		
		For the second LR2 move, the LHS of Figure \ref{SecondLRII} gives five generators $x,y,a,b,c$ and three relations  $a=x \ast y$, $b=x \ast y \ast^{-1} \U(x)$ and $c=\U(x)$. Since the RHS of Figure \ref{SecondLRII} gives three generators $x,y,z$ and one relation $z=\U(x)$, it follows that the $\GL$-racks obtained from the LHS and the RHS are isomorphic.
		
		\begin{figure}[h] 
			\centering
			\labellist
			\small
			\pinlabel ${c=\U(x)}$ at -50 140
			\pinlabel {$y$} at 60 5
			\pinlabel {$x$} at -10 5
			\pinlabel {$b=x \ast y \ast^{-1} \U(x)$} at 200 140
			\pinlabel {$a=x \ast y$} at 30 80
			\pinlabel {$z=\U(x)$} at 560 140
			\pinlabel {$x$} at 460 5
			\pinlabel {$y$} at 440 150
			\endlabellist
			\includegraphics[width=.6\textwidth]{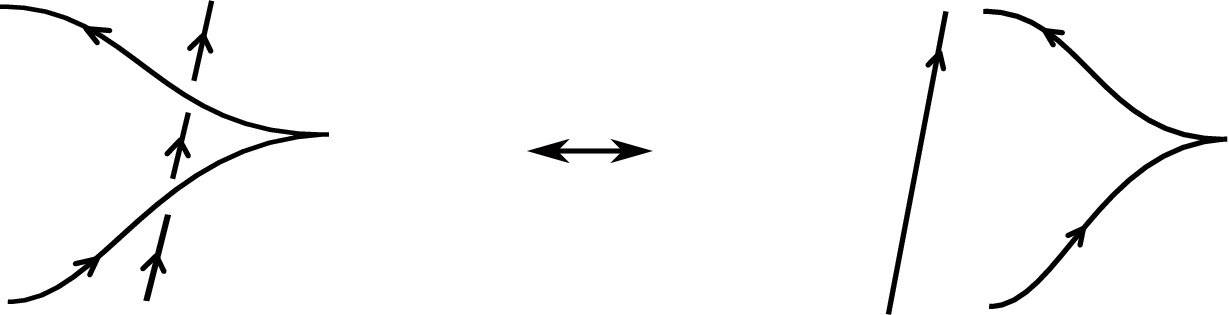}
			\caption{Second LR2 move.}\label{SecondLRII}
		\end{figure}
		The other oriented LR2 moves apart from these can be dealt with similarly.
		\par
		\textbf{LR3 moves.} The proof of invariance of  the Legendrian rack under LR3 moves is the same as that of the fundamental quandle of an oriented topological link. This completes the proof of the theorem.
	\end{proof}
	
	In view of Theorem \ref{generalised Legendrian rack invariant}, we have the following definition.
	
	\begin{defn}
		The generalised Legendrian rack $(GLR(K), *, \U, \D)$ of an oriented Legendrian link $K$ is defined to be  $(GLR(D(K)), *, \U, \D)$ for some front diagram $D(K)$ of $K$. 
	\end{defn}
	
	\begin{remark}\label{finite gen of GL rack}
		We note that the $\GL$-rack $(GLR(K), *, \U, \D)$ of an oriented Legendrian link $K$ is finitely generated as a $\GL$-rack since each front diagram has only finitely many crossings and cusps. At the same time, it must be noted that the maps $\U$ and $\D$ are not explicit in general.
	\end{remark}
	
	The following result shows that $\GL$-racks distinguish infinitely many oriented Legendrian unknots.
	
	\begin{theorem}\label{unknots distinguish}
		The Legendrian unknots $U(m,n)$ and $U(k, \ell)$ (as in Figure \ref{unknot}) are not Legendrian isotopic for $(m,n) \neq (k,\ell)$.
	\end{theorem}		
	
	\begin{proof}
		Since $(m,n) \neq (k,\ell)$, either $m \neq k$ or $n \neq \ell$. Without loss of generality, we may assume that $m \neq k$, say, $m < k$. It follows from Figure \ref{unknot} that 
		$$
		(GLR(U(m,n)), \ast, \U_1, \D_1) = \llangle x ~\mid~ \D_1^n\U_1^m(x)=x \rrangle.
		$$
		Similarly, we have
		$$
		(GLR(U(k,\ell)), \ast, \U_2, \D_2) = \llangle y ~\mid~ \D_2^{\ell}\U_2^{k}(y)=y \rrangle.
		$$
		\begin{figure}[h]
			\centering
			\labellist
			\tiny
			\pinlabel ${\U_1^{m-3}(x)}$ at 155 145
			\pinlabel ${\U_1^{m-1}(x)}$ at 155 175
			\pinlabel ${\U_1^{m}(x)}$ at 200 210
			\pinlabel ${\U_1(x)}$ at 220 60
			\pinlabel ${\U_1^{2}(x)}$ at 165 75
			\pinlabel $\D_1^n\U^m(x)=x$ at 220 20
			\pinlabel $\D_1^{n-1}\U^m(x)$ at -20 60
			\pinlabel $\D_1^{n-3}\U^m(x)$ at -20 90
			\pinlabel $\D_1\U^m(x)$ at -10 165
			\pinlabel $\D_1^3\U^{m}(x)$ at -10 135
			\endlabellist
			\includegraphics[width=.3\textwidth]{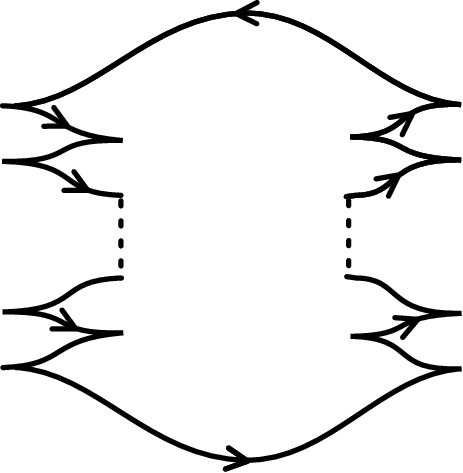}
			\vspace{0.5cm}
			\caption{Unknot of type $U(m,n)$.}\label{unknot}
		\end{figure}	
		Consider the $\GL$ rack $(\mathbb{Z}_k,\ast_{\sigma}, \sigma^{-1}, \id)$, where $\sigma$ is a $k$-cycle on $\mathbb{Z}_k$, the set of integers modulo $k$. Define $\phi:GLR(U(k,\ell)) \rightarrow \mathbb{Z}_k$ by setting $\phi(y)=0$. Then, $\phi$ is a $\GL$-rack homomorphism since $\phi(\D_2^{\ell}\U_2^k(y))=\sigma^{-k}(\phi(y))=\sigma^{-k}(0)=0$. We claim that there does not exist any $\GL$-rack homomorphism from $(GLR(U(m,n)), \ast, \U_1, \D_1)$ to $(\mathbb{Z}_k,\ast_{\sigma}, \sigma^{-1}, \id)$.  Suppose that there exists a $\GL$-rack homomorphism $\psi: GLR(U(m,n)) \rightarrow \mathbb{Z}_k$ given by $\psi(x)=a$ for some $a \in \mathbb{Z}_k$. Then, $\psi(\D_1^n\U_1^m(x))=\sigma^{-m}(\psi(x))=\sigma^{-m}(a)= a$, which is not possible.  Hence, $U(m,n)$ and $U(k,\ell)$ are not Legendrian isotopic.
	\end{proof}

	We now distinguish infinitely many oriented Legendrian trefoils.
	
	\begin{theorem}\label{trefoils distinguish}
		The Legendrian trefoils $T(m,n)$ and $T(k,\ell)$ (as in Figure \ref{trefoil}) are not Legendrian isotopic for $(m,n)\neq (k, \ell)$.
	\end{theorem}
	
	\begin{proof}
		Using cusp relations, crossing relations and axioms of a $\GL$-rack, one can conclude that the presentation of the Legendrian trefoil $T(m,n)$ as shown in Figure \ref{trefoil} is given by
		$$ (GLR(T(m,n)), \ast, \U_1, \D_1) = \llangle a,g ~\mid~ g \ast a = \U_1^{2m}\D_1^{2n}(a \ast^{-1} g), \quad a \ast g= \D_1 \U_1^{-1}(g \ast^{-1}a) \rrangle. $$
		Similarly, we have
		$$ (GLR(T(k,\ell)), \ast, \U_2, \D_2) = \llangle a',g' ~\mid~ g' \ast a' = \U_2^{2k}\D_2^{2\ell}(a' \ast^{-1} g'), \quad a' \ast g'= \D_2 \U_2^{-1}(g' \ast^{-1}a') \rrangle. $$
		Since $(m,n) \neq (k,\ell)$,  we may assume that $m < k$. Consider the $\GL$-rack $(\mathbb{Z}_{2k+3}, *_\sigma, \sigma^{-1}, \id)$, where $\sigma(x)=x+1 \mod (2k+3)$. Define $\phi:GLR(T(k,\ell)) \rightarrow \mathbb{Z}_{2k+3}$ by setting $\phi(a')=0$ and $\phi(g')=1$. Then $\phi(g' \ast a')=2$ and $\phi(\U_2^{2k} \D_2^{2 \ell}(a' \ast^{-1} g'))= \sigma^{-2k}(2k+2) = 2$. Similarly, $\phi(a' \ast g') =1$ and $ \phi(\U_2^{-1} \D_2(g' \ast^{-1}a'))=\sigma (\phi (g' \ast^{-1}a'))=\sigma(0)=1$. Thus, $\phi$ is a $\GL$-rack homomorphism.  For the other trefoil, suppose that there exists a $\GL$-rack homomorphism $\psi:GLR(T(m,n)) \rightarrow \mathbb{Z}_{2k+3}$ defined by $\psi(a)=x$ and $\psi(g)=y$ for some $x,y \in \mathbb{Z}_{2k+3}$. Applying $\psi$ to the relations gives $\sigma^{2m+3}(x)=x$, which is not possible as $m<k$. Hence, $T(m,n)$ and $T(k,\ell)$ are not Legendrian isotopic for $(m,n) \neq (k, \ell)$.
		
		\begin{figure}[h]
			\centering
			\labellist
			\small
			\pinlabel $b$ at 120 330
			\pinlabel $a$ at 90 30
			\pinlabel $f$ at 390 330
			\pinlabel $x_1$ at 435 73
			\pinlabel $x_2$ at 430 125
			\pinlabel $i$ at 270 80
			\pinlabel $e$ at 320 20
			\pinlabel $d$ at 170 120
			\pinlabel $h$ at 170 167
			\pinlabel $g$ at 170 250
			\pinlabel $y_1$ at 247 279
			\pinlabel $y_2$ at 290 255
			\pinlabel $y_{2n-2}$ at 300 190
			\pinlabel $c$ at 270 155
			\pinlabel $x_{2m-2}$ at 450 269
			\endlabellist
			\includegraphics[width=.4\textwidth]{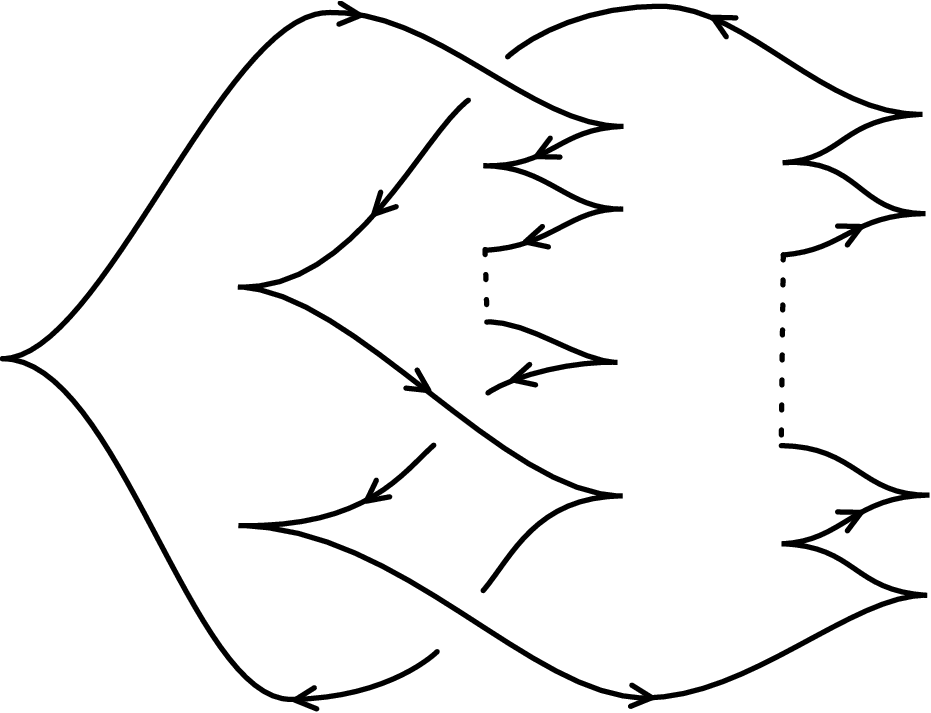}
			\vspace{0.5cm}
			\caption{Trefoil of type $T(m,n)$.}\label{trefoil}
		\end{figure}
	\end{proof}
	
	\begin{remark}
		Theorems \ref{unknots distinguish} and \ref{trefoils distinguish} provide us a good deal of examples with the following properties.
		\begin{enumerate}
			\item Rotation number does not differentiate the following pairs of knots  but $\GL$-rack does:
			\begin{enumerate}
				\item  Unknots of the type $U(m,m)$ and $U(n,n)$ for $m \neq n$.
				\item Trefoils of the type $T(k,k)$ and $T(\ell,\ell)$ for $k \neq \ell$. 
			\end{enumerate}	
			\item Thurston-Bennequin number does not differentiate  the following pairs of knots but $\GL$-rack does:
			\begin{enumerate}
				\item Unknots of the type $U(1,2m-1)$ and $U(m,m)$ for odd $m  \geq 3$.
				\item Trefoils of the type $T(1,2n-1)$ and $T(n,n)$ for odd $n  \geq 3$.
			\end{enumerate}
		\end{enumerate}
	\end{remark}
	
	\begin{remark}
		Note that the oriented Legendrian unknots as in Figure \ref{unknots} have isomorphic $\GL$-racks. Further, it is known due to Eliashberg and Fraser \cite[Theorem 1.5]{MR2496415} that these two unknots are Legendrian isotopic.
		\begin{figure}[h]
			\centering
			\labellist
			\tiny
			\pinlabel $\U^3\D^3(x)=x$ at 170 -20
			\pinlabel $\D \U(x)$ at 180 120
			\pinlabel $\D^2\U(x)$ at 170 50
			\pinlabel $\D^2\U^2(x)$ at 60 120
			\pinlabel $\D^2\U^3(x)$ at 120 180
			\pinlabel $\U(x)$ at 280 190
			\pinlabel $\U^3\D^3(y)=y$ at 570 -20
			\pinlabel $\U(y)$ at 570 70
			\pinlabel $\U^2(y)$ at 630 110
			\pinlabel $\U^3(y)$ at 610 160
			\pinlabel $\D^2\U^3(y)$ at 390 80
			\pinlabel $\D\U^3(y)$ at 480 120
			\endlabellist
			\includegraphics[width=.6\textwidth]{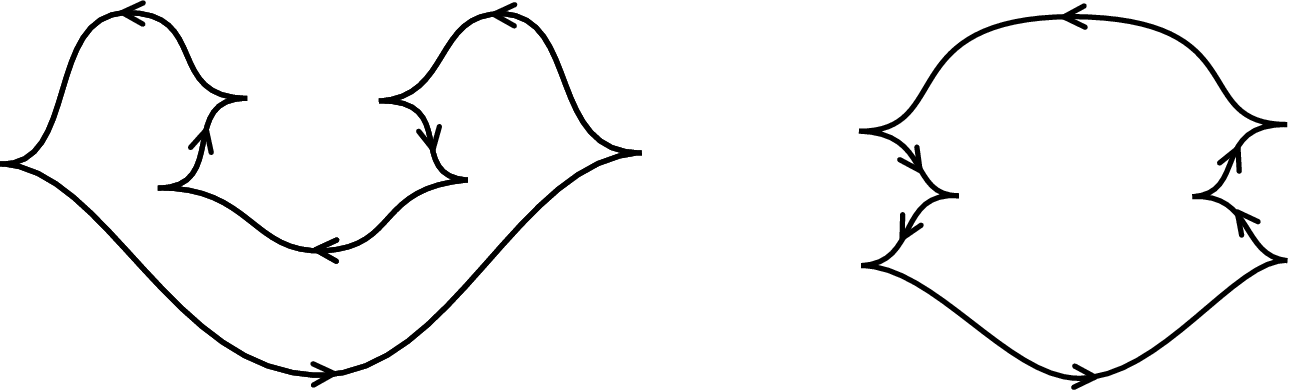}
			\vspace{0.5cm}
			\caption{Legendrian unknots.}\label{unknots}
		\end{figure}
	\end{remark}	
	
	\begin{remark}
		Consider the oriented Chekanov knots as in Figure \ref{chekanovknots}. Then their $\GL$-racks are given by
		\begin{eqnarray*}
			GLR(K_1)=\llangle e_1,m_1 &\mid& \D_1^3\U_1^2((e_1 \ast m_1)\ast e_1) \ast ((m_1 \ast e_1) \ast m_1)=m_1,\\
			& & \D_1^2\U_1^3((m_1 \ast e_1)\ast m_1) \ast ((e_1 \ast m_1) \ast e_1)=e_1 \rrangle
		\end{eqnarray*}
		and 
		\begin{eqnarray*}
			GLR(K_2)=\llangle e_2,m_2 &\mid& \D_2^3\U_2^4((e_2 \ast m_2)\ast e_2) \ast ((m_2 \ast e_2) \ast m_2)=m_2, \\
			& &\D_2^2\U_2((m_2 \ast e_2)\ast m_2) \ast ((e_2 \ast m_2) \ast e_2)=e_2 \rrangle.
		\end{eqnarray*}
		Define $\psi: GLR(K_1) \rightarrow GLR(K_2)$ by $\psi(e_1)=\U_2(e_2)$ and $\psi(m_1)=\U_2^{-1}(m_2)$, and $\phi: GLR(K_2) \rightarrow GLR(K_1)$ by $\phi(e_2)=\U_1^{-1}(e_1)$ and $\psi(m_2)=\U_1(m_1)$. It follows that both $\phi$ and $\psi$ can be extended to $GL$-rack homomorphisms such that $\phi ~ \psi=\id$ and $\psi ~ \phi = \id$. However, these two knot are famously known to be Legendrian non-isotopic due to Chekanov \cite[Theorem 5.8]{MR1946550}. Thus, the $GL$-rack is not a complete invariant.	
		\begin{figure}[h]
			\centering
			\labellist
			\tiny	
			\pinlabel $b_1$ at 200 130
			\pinlabel $g_1$ at 50 167
			\pinlabel $m_1$ at 0 167
			\pinlabel $e_1$ at 425 330
			\pinlabel $h_1$ at 270 80
			\pinlabel $n_1$ at 320 -20
			\pinlabel $p_1$ at 300 430
			\pinlabel $f_1$ at 300 490
			\pinlabel $a_1$ at 140 167
			\pinlabel $q_1$ at 200 290
			\pinlabel $j_1$ at 260 350
			\pinlabel $i_1$ at 300 320
			\pinlabel $k_1$ at 282 270
			\pinlabel $c_1$ at 260 190
			\pinlabel $\ell_1$ at 270 230
			\pinlabel $o_1$ at 380 330
			\pinlabel $\ell_2$ at 810 130
			\pinlabel $g_2$ at 670 167
			\pinlabel $m_2$ at 600 167
			\pinlabel $h_2$ at 870 80
			\pinlabel $n_2$ at 920 -20
			\pinlabel $p_2$ at 900 440
			\pinlabel $f_2$ at 900 500
			\pinlabel $a_2$ at 690 230
			\pinlabel $q_2$ at 792 167
			\pinlabel $j_2$ at 870 330
			\pinlabel $i_2$ at 930 300
			\pinlabel $k_2$ at 880 270
			\pinlabel $c_2$ at 770 250
			\pinlabel $b_2$ at 820 200
			\pinlabel $o_2$ at 985 330
			\pinlabel $e_2$ at 1040 330
			\endlabellist
			\includegraphics[width=.8\textwidth]{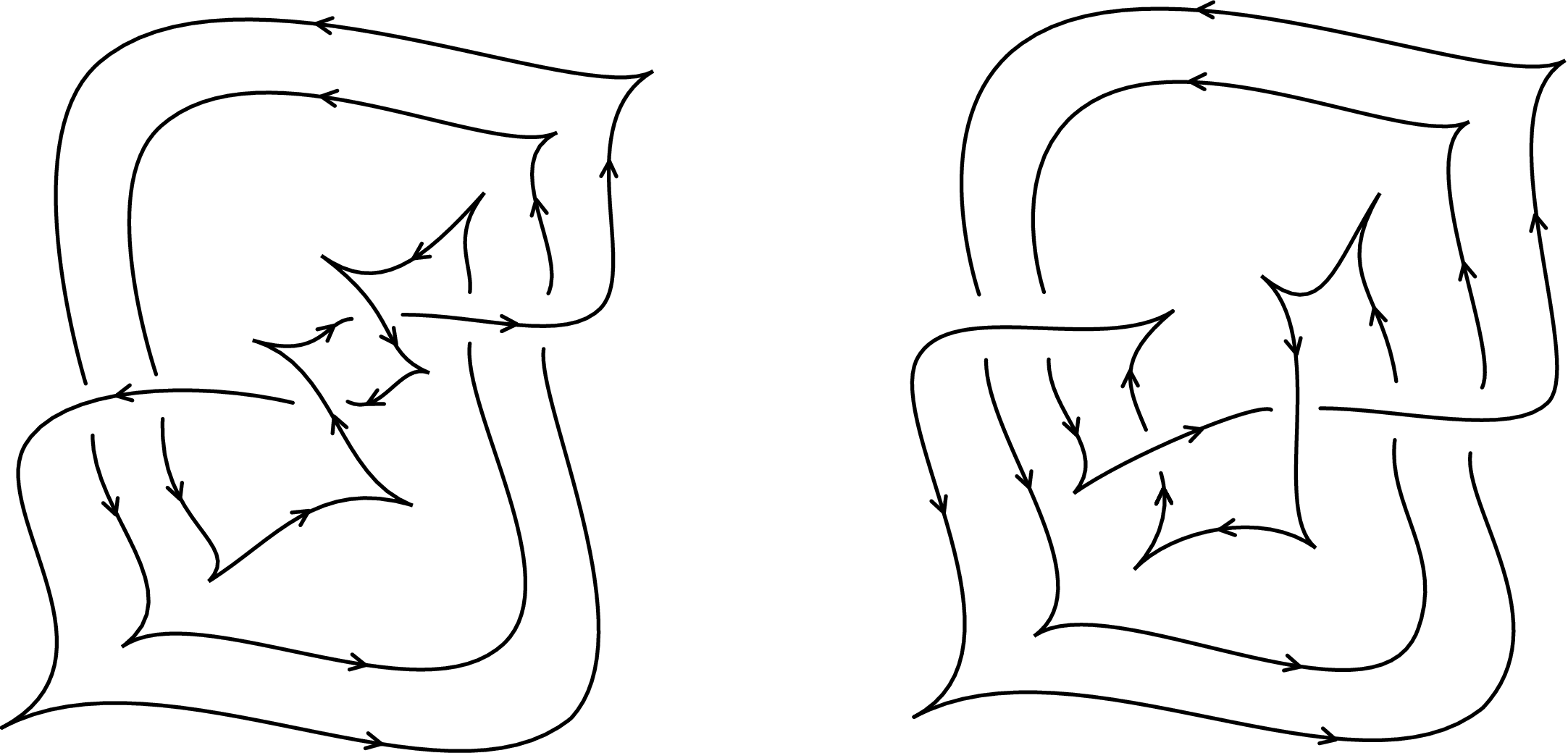}
			\vspace{0.5cm}
			\caption{Chekanov knots $K_1$ and $K_2$.}\label{chekanovknots}
		\end{figure}
	\end{remark}

	\begin{theorem} \label{legendrian link distinguishes distinct top links}
		If $K_1$ and $K_2$ are two topologically distinct non-split oriented Legendrian links, then $(GLR(K_1), *, \U,\D) \not\cong (GLR (K_2), *, \U',\D')$.
	\end{theorem}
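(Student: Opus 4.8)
�The plan is to show that a $\GL$-rack isomorphism between the generalised Legendrian racks of two non-split oriented Legendrian links forces their underlying topological links to be equivalent. The key observation is that the $\GL$-rack $(GLR(K),*,\U,\D)$ carries, inside it, the fundamental quandle of the underlying oriented topological link. More precisely, I would first argue that forgetting the Legendrian structure $(\U,\D)$ and passing to an appropriate quotient recovers the fundamental quandle of the underlying topological link $|K|$. The natural way to extract this quotient is to quandle-ify the rack: impose the idempotency relation, which by the remark following the definition of a $\GL$-rack amounts to setting $\D\U=\id=\U\D$. Under this quotient the cusp relations $y=\U(x)$ and $y=\D(x)$ collapse adjacent strands separated by a cusp into a single quandle generator, while the crossing relations are exactly the Wirtinger-type quandle relations of $|K|$. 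Thus the quandle $Q(K)$ obtained from $(GLR(K),*,\U,\D)$ by imposing idempotency is isomorphic to the fundamental quandle $Q(|K|)$ of the underlying oriented topological link.

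Next I would verify that this quandle-ification is functorial: any $\GL$-rack isomorphism $\psi:(GLR(K_1),*,\U,\D)\to(GLR(K_2),*,\U',\D')$ intertwines the respective Legendrian structures, and therefore descends to a well-defined quandle isomorphism $\bar\psi:Q(K_1)\to Q(K_2)$ between the idempotent quotients. Indeed, since $\psi\,\U=\U'\,\psi$ and $\psi\,\D=\D'\,\psi$, the relation $\D\U=\id$ in the source is carried to $\D'\U'=\id$ in the target, so $\psi$ respects the defining relations of the quotients and induces an isomorphism on them. Composing with the identifications $Q(K_i)\cong Q(|K_i|)$ from the previous step yields an isomorphism $Q(|K_1|)\cong Q(|K_2|)$ of the fundamental quandles of the underlying oriented topological links.

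Finally I would invoke the theorem of Joyce and Matveev, cited in the introduction, which states that for non-split links the fundamental quandle is a complete invariant up to orientation of the ambient space: isomorphic fundamental quandles imply that the links $|K_1|$ and $|K_2|$ are topologically equivalent (possibly after reversing ambient orientation). This contradicts the hypothesis that $K_1$ and $K_2$ are topologically distinct, and hence no such $\GL$-rack isomorphism $\psi$ can exist, proving $(GLR(K_1),*,\U,\D)\not\cong(GLR(K_2),*,\U',\D')$.

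The main obstacle I anticipate is the first step, namely establishing cleanly that imposing idempotency on $GLR(K)$ recovers precisely the fundamental quandle of $|K|$ rather than some larger or smaller quotient. One must check carefully that collapsing the cusp relations under $\D\U=\id$ does not identify strands that should remain distinct in $Q(|K|)$, and conversely that every Wirtinger relation of the underlying diagram is accounted for. Verifying that the up and down cusp generators merge correctly, using axioms (L1), (L1$'$), (L3), and (L3$'$), is the delicate point; the remaining functoriality and the appeal to the Joyce--Matveev theorem are comparatively routine.
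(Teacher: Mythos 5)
Your overall strategy (quotient the $\GL$-rack down to the fundamental quandle of the underlying topological link, then invoke Joyce--Matveev) is the same as the paper's, but the specific quotient you propose in the first step is the wrong one, and the point you yourself flag as ``delicate'' is exactly where the argument breaks. Imposing idempotency, i.e.\ passing to the quotient where $x*x=x$ (equivalently, where $\D\U=\id=\U\D$), does \emph{not} force $\U$ and $\D$ to become the identity; it only forces them to become mutually inverse automorphisms. Consequently the cusp relations $y=\U(x)$ and $y=\D(x)$ do \emph{not} collapse adjacent strands into a single generator in your quotient, and the result is in general strictly larger than the fundamental quandle. Concretely, by Proposition \ref{GLR for U(1,2m-1)} the $\GL$-rack of $U(1,2m-1)$ is $(\mathbb{Z},*_\sigma,\U',\D')$ with $\sigma(k)=k+2m-2$; the congruence generated by $x*_\sigma x\sim x$ identifies $k$ with $k+2m-2$, so the quandle-ification is the trivial quandle on $2m-2$ elements with $\U'$ still acting as $k\mapsto k+1$, whereas the fundamental quandle of the topological unknot is a single point. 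So your claimed isomorphism $Q(K)\cong Q(|K|)$ fails already for these examples.

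The fix is what the paper does: quotient instead by the relations $\U(x)=x$ and $\D(x)=x$ for all generators $x$. Since $\U$ and $\D$ are rack homomorphisms, this kills them identically, the cusp relations then literally merge the strands on either side of each cusp into the arcs of the underlying diagram, the crossing relations become the Wirtinger relations, and (L1) forces idempotency, so the quotient is exactly the fundamental quandle of the underlying oriented topological link. Your functoriality argument goes through verbatim for this stronger quotient (a $\GL$-rack isomorphism intertwines the Legendrian structures, hence carries the congruence generated by $\U(x)\sim x$, $\D(x)\sim x$ to the corresponding one in the target), and the appeal to Joyce--Matveev then finishes the proof as you describe.
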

	
	\begin{proof}
		Note that the fundamental quandle of the underlying oriented topological link can be obtained from the $\GL$-rack of the oriented Legendrian link by adding the relation  $\U(x)=\D(x)=x$ for each generators $x$ of the  $\GL$-rack. Further, recall that the fundamental quandle is a complete invariant of a non-split oriented link upto orientation and mirror-isotopy. Since $K_1$ and $K_2$ are topologically distinct  non-split oriented Legendrian links, it follows that $Q(K_1) \not\cong Q(K_2)$, and hence $(GLR(K_1), *, \U,\D) \not\cong (GLR (K_2), *, \U',\D')$.
	\end{proof}

Motivated by theorems \ref{unknots distinguish} and \ref{trefoils distinguish}, we define the notion of colouring of an oriented Legendrian link by a  $\GL$-rack.

\begin{defn}
Let $K$ be an oriented Legendrian link,  $D(K)$ its front diagram and $S$ the set of strands in $D(K)$. A coloring of  $D(K)$ by a given $\GL$-rack $(X, \star, \U',\D')$ is a map $\mathcal{C}: S \to X$ such that 
	\begin{enumerate}
		\item  $\mathcal{C}(x) \star \mathcal{C}(y)=\mathcal{C}(z)$,
		\item $ \mathcal{C}(b)= \U'(\mathcal{C}(a))$,
		\item $ \mathcal{C}(a)= \D'(\mathcal{C}(b))$,
	\end{enumerate}
	where $x, y, z, a, b$ are the strands as shown in Figure \ref{generalised Legendrian colouring}.
	\begin{figure}[h] 
		\centering
		\labellist
		\small
		\pinlabel ${z}$ at 120 130
		\pinlabel {$y$} at -5 130
		\pinlabel {$x$} at -5 10
		\pinlabel {$x$} at 190 10
		\pinlabel {$y$} at 190 130
		\pinlabel {$z$} at 320 130
		\pinlabel  {$b=\U(a)$} at 460 130
		\pinlabel{$a$} at 460 10
		\pinlabel  {$a=\D(b)$} at 680 10
		\pinlabel{$b$} at 600 130
		\pinlabel{$\mathcal{C}(x) \star \mathcal{C}(y)=\mathcal{C}(z)$} at 180 -30
		\pinlabel{$\mathcal{C}(b)=\U'(\mathcal{C}(a))$} at 440 -30
		\pinlabel{$\mathcal{C}(a)=\D'(\mathcal{C}(b))$} at 640 -30
		\endlabellist
		\includegraphics[width=.7\textwidth]{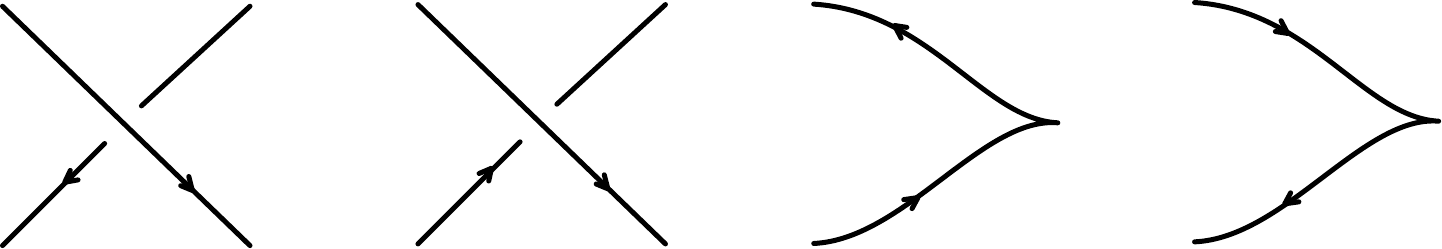}
		\vspace*{0.5cm}
		\caption{Coloring conditions at crossings and cusps.}\label{generalised Legendrian colouring}
	\end{figure}
\end{defn}

It is not difficult to see that the total number of colorings of $D(K)$  by  $(X, \star,\U',\D')$ is an invariant of the link $L$, that is, it does not depend on the front diagram of $K$. Furthermore, it follows from the construction of $(GLR(K), *,\U,\D)$  that the number of colorings equals the cardinality of the set $\Hom_{GLR}(K, X)$ of all  $\GL$-rack homomorphisms from $(GLR(K), *,\U,\D)$ to $(X, \star,\U',\D')$. In view of Theorem \ref{generalised Legendrian rack invariant}, we have the following result.

\begin{corollary}\label{cor generalised Legendrian coloring}
If $K$ is an oriented Legendrian link and $(X, \star,\U',\D')$ a  $\GL$-rack, then $\Hom_{GLR}(K, X)$ is an invariant of $K$.
\end{corollary}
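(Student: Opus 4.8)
The plan is to deduce the statement directly from the invariance of the generalised Legendrian rack itself (Theorem \ref{generalised Legendrian rack invariant}) together with the functoriality of the $\Hom$ construction; no new geometric input is needed. First I would recall that, by Theorem \ref{generalised Legendrian rack invariant}, if $K$ and $K'$ are Legendrian equivalent oriented Legendrian links then their associated $\GL$-racks $(GLR(K), *, \U, \D)$ and $(GLR(K'), *, \U', \D')$ are isomorphic as $\GL$-racks. Fix such an isomorphism $\psi \colon GLR(K) \to GLR(K')$. Since $\Hom_{GLR}(K, X)$ was identified in the preceding discussion with $\Hom_{GLR}(GLR(K), X)$, it suffices to show that an isomorphism $\psi$ induces a bijection between the corresponding $\Hom$-sets.

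Next I would produce this bijection by precomposition. Define $\psi^{*} \colon \Hom_{GLR}(GLR(K'), X) \to \Hom_{GLR}(GLR(K), X)$ by $\psi^{*}(f) = f \circ \psi$. The one routine verification is that $f \circ \psi$ is again a $\GL$-rack homomorphism whenever $f$ is: this is immediate from the three defining conditions of a $\GL$-rack homomorphism, since $(f\circ\psi)(x \ast y) = f(\psi(x) \ast \psi(y)) = f(\psi(x)) \star f(\psi(y))$, and likewise $(f\circ\psi)\,\U = f\,\U'\,\psi = \U''\,f\,\psi = \U''\,(f\circ\psi)$, and similarly for $\D$. Here I am using only that $\GL$-racks form a category, i.e. that composites of $\GL$-rack homomorphisms are $\GL$-rack homomorphisms, which was already observed in the excerpt.

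Because $\psi$ is an isomorphism, its inverse $\psi^{-1}$ is also a $\GL$-rack homomorphism, so $(\psi^{-1})^{*}$ is defined and is a two-sided inverse of $\psi^{*}$; hence $\psi^{*}$ is a bijection. This shows that $\Hom_{GLR}(K, X)$ is, up to canonical bijection (in particular up to cardinality), independent of the chosen representative within the Legendrian equivalence class of $K$, which is exactly the assertion that it is an invariant. Equivalently, since the colorings of a front diagram $D(K)$ by $X$ are in bijection with $\Hom_{GLR}(K, X)$, the number of colorings is an invariant of $K$. I do not expect a genuine obstacle in this argument; the only point requiring care is the closure of the category of $\GL$-racks under composition and under inverses of isomorphisms, which is precisely what makes $\psi^{*}$ well defined and bijective.
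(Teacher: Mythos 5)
Your argument is correct and matches the paper's own route: the paper likewise deduces the corollary from Theorem \ref{generalised Legendrian rack invariant}, identifying colorings of a front diagram with elements of $\Hom_{GLR}(K,X)$ and using that an isomorphism of $\GL$-racks induces a bijection of $\Hom$-sets. Your write-up just makes the precomposition bijection $\psi^{*}$ explicit, which the paper leaves implicit.
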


\begin{remark}\label{finiteness GL morphisms}
It follows from the definition of a morphism between two $\GL$-racks and Proposition \ref{wordsinfreeGLR} that a morphism from  $(GLR(K), *,\U,\D)$  to  $(X, \star,\U',\D')$  is completely determined by its values on the set of strands in a front diagram $D(K)$ of $K$. Since the set of strands is always finite, the set $\Hom_{GLR}(K, X)$ is also finite whenever $X$ is finite.
\end{remark}
	\medskip

	\section{Homogeneous representation of a generalised Legendrian rack}\label{homogenous representation}
	This section is motivated by \cite{Joyce1979}, where a similar result is proved for quandles.  Let $G$ be a group, $I$ an indexing set, $\{z_i \mid i \in I \}$ a subset of $G$ and $\{H_i \mid i \in I \}$ a family of subgroups of $G$ such that  $H_i \le \C_G(z_i)$ for all $i \in I$. Then, the disjoint union $\sqcup_{i \in I} ~G/H_i$ of left cosets forms a rack, denoted $(\sqcup_{i \in I}~G/H_i,~\{z_i \}_{i \in I})$, by setting $$xH_i \ast yH_j = yz_jy^{-1}xH_i$$ for $xH_i,yH_j \in \sqcup_{i \in I}~G/H_i$.

	\begin{prop}\label{legendrian rack on coset}
		Let $G$ be a group, $I$ an indexing set, $\{z_i \mid i \in I \}$ a subset of $G$ and $\{H_i \mid i \in I \}$ a family of subgroups of $G$ such that  $H_i \le \C_G(z_i)$ for all $i \in I$.  Further, suppose that there are subsets $\{ r_i \mid i \in I \}$ and $\{s_i \mid i \in I \}$ of $G$, and a bijection $\tau: I \to I$ with  $\mu:I \to I$ as its inverse such that the following conditions are satisfied:
		\begin{enumerate}
			\item $r_i^{-1} h_i r_i \in H_{\tau(i)}$ for all $i \in I$ and $h_i \in H_i$.
			\item $s_i^{-1} h_i s_i \in H_{\mu(i)}$ for all $i \in I$ and $h_i \in H_i$.
			\item $z_i r_i s_{\tau(i)} \in H_i$ for all $i \in I$.
			\item $z_i s_i r_{\mu(i)} \in H_i$ for all $i \in I$.
			\item $z_i r_i=r_i z_{\tau(i)}$ for all $i \in I$.
			\item $z_i s_i=s_i z_{\mu(i)}$ for all $i \in I$.
		\end{enumerate}
		Then, the maps
		$$\mathtt{u}_r,~ \mathtt{d}_s:\sqcup_{i \in I} ~G/H_i \longrightarrow \sqcup_{i \in I} ~G/H_i$$
		defined by $\mathtt{u}_r(xH_j )= xr_jH_{\tau(j)}$ and $\mathtt{d}_s(xH_j)= xs_j H_{\mu(j)}$ form a Legendrian structure on the rack $(\sqcup_{i \in I}~G/H_i,~\{z_i \}_{i \in I})$.
	\end{prop}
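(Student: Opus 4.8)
The plan is to verify everything by direct computation, checking first that the two maps are well defined and then establishing each of the six defining axioms (L1)--(L3$'$), while taking for granted that $(\sqcup_{i\in I}G/H_i,\{z_i\}_{i\in I})$ is already a rack under $xH_i\ast yH_j=yz_jy^{-1}xH_i$. First I would confirm that $\mathtt{u}_r$ and $\mathtt{d}_s$ are independent of the chosen coset representatives. If $xH_j=x'H_j$, so that $x^{-1}x'\in H_j$, then $(xr_j)^{-1}(x'r_j)=r_j^{-1}(x^{-1}x')r_j$ lies in $H_{\tau(j)}$ by condition (1), whence $xr_jH_{\tau(j)}=x'r_jH_{\tau(j)}$; this makes $\mathtt{u}_r$ well defined, and condition (2) gives the analogous statement for $\mathtt{d}_s$ via $\mu$.

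The axioms (L2) and (L2$'$) require no further hypotheses. The rack product $(-)\ast yH_j$ acts on a coset $xH_i$ by left multiplication by $yz_jy^{-1}$ while fixing the index $i$, whereas $\mathtt{u}_r$ acts by right multiplication by $r_i$ together with the reindexing $i\mapsto\tau(i)$. Since left and right multiplication in $G$ commute, $\mathtt{u}_r(xH_i\ast yH_j)=yz_jy^{-1}xr_iH_{\tau(i)}=\mathtt{u}_r(xH_i)\ast yH_j$, which is precisely (L2); the identical argument with $s_i$ and $\mu$ yields (L2$'$).

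For (L3) I would compute $xH_i\ast\mathtt{u}_r(yH_j)=xH_i\ast yr_jH_{\tau(j)}=(yr_j)z_{\tau(j)}(yr_j)^{-1}xH_i$ and compare it with $xH_i\ast yH_j=yz_jy^{-1}xH_i$; these agree exactly when $r_jz_{\tau(j)}r_j^{-1}=z_j$, which is condition (5), and condition (6) yields (L3$'$) in the same way. For (L1$'$) note that $xH_i\ast xH_i=xz_iH_i$, so applying $\mathtt{u}_r$ and then $\mathtt{d}_s$ and using $\mu(\tau(i))=i$ gives $\mathtt{d}_s\mathtt{u}_r(xH_i\ast xH_i)=xz_ir_is_{\tau(i)}H_i$, which equals $xH_i$ precisely when $z_ir_is_{\tau(i)}\in H_i$, i.e.\ condition (3); symmetrically, (L1) follows from $\tau(\mu(i))=i$ together with condition (4).

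The individual steps are elementary, so I do not expect a deep obstacle; the point demanding genuine care is the bookkeeping of coset indices under the mutually inverse bijections $\tau$ and $\mu$, and its interplay with well-definedness. One must track which of $z_i,z_{\tau(i)},z_{\mu(i)}$ enters each product and confirm that every intermediate coset is formed over the correct subgroup $H_\bullet$; the centraliser condition $H_i\le\C_G(z_i)$, already needed to make $\ast$ well defined, is what allows the representative-level computations to close up.
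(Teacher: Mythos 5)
Your proof is correct and follows essentially the same route as the paper: well-definedness from conditions (1) and (2), axioms (L2), (L2$'$) from the commuting of left and right multiplication, (L3), (L3$'$) from conditions (5) and (6), and (L1), (L1$'$) from conditions (4) and (3) together with $\tau\mu=\mu\tau=\id_I$. The matching of each hypothesis to each axiom agrees with the paper's computation.
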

	
	\begin{proof}
		Suppose that $xH_i=yH_i$, that is, there exists $h \in H_i$ such that $x=yh$. Then 
		$$\mathtt{u}_r(xH_i)=xr_iH_{\tau(i)}=yhr_iH_{\tau(i)}=yr_i(r_i^{-1}hr_i)H_{\tau(i)}=yr_iH_{\tau(i)}=\mathtt{u}_r(yH_i)$$
		and
		$$\mathtt{d}_s(xH_i)=xs_iH_{\mu(i)}=yhs_iH_{\mu(i)}=ys_i(s_i^{-1}hs_i)H_{\mu(i)}=ys_iH_{\mu(i)}=\mathtt{d}_s(yH_i).$$
		Thus, the maps $\mathtt{u}_r$ and $\mathtt{d}_s$ are well-defined. For $xH_i,yH_j \in \sqcup_{i \in I}~G/H_i$, a direct computation gives
		$$\mathtt{u}_r \mathtt{d}_s(xH_i \ast xH_i)=\mathtt{u}_r \mathtt{d}_s (xz_iH_i)=xz_i s_i r_{\mu(i)}H_{\tau \mu(i)}=xH_i,$$
		$$\mathtt{d}_s \mathtt{u}_r(xH_i \ast xH_i)=\mathtt{d}_s \mathtt{u}_r (xz_iH_i)=xz_i r_i s_{\tau(i)}H_{\mu \tau(i)}=xH_i,$$
		$$\mathtt{u}_r(xH_i \ast yH_j)= \mathtt{u}_r(yz_jy^{-1}xH_i)=yz_jy^{-1}x r_i H_{\tau(i)}= xr_iH_{\tau(i)} \ast yH_j= \mathtt{u}_r(xH_i)\ast yH_j,$$
		$$ \mathtt{d}_s(xH_i \ast yH_j)= \mathtt{d}_s(yz_jy^{-1}xH_i)=yz_jy^{-1}x s_i H_{\mu(i)}= xs_iH_{\mu(i)} \ast yH_j= \mathtt{d}_s(xH_i)\ast yH_j,$$
		$$xH_i \ast \mathtt{u}_r(yH_j)= xH_i \ast yr_jH_{\tau(j)} = yr_j z_{\tau(j)} r_j^{-1}y^{-1} xH_i=yz_jy^{-1}xH_i= xH_i \ast yH_j$$
		and
		$$xH_i \ast \mathtt{d}_s(yH_j)= xH_i \ast y s_j H_{\mu(j)} = ys_j z_{\mu(j)} s_j^{-1}y^{-1} xH_i=yz_jy^{-1}xH_i= xH_i \ast yH_j.$$
		Hence, $\mathtt{u}_r$ and $\mathtt{d}_s$ form a Legendrian structure on $(\sqcup_{i \in I}~G/H_i,~\{z_i \}_{i \in I})$.
	\end{proof}
	
	We denote the $\GL$-rack obtained in the preceding proposition by \\
	$(\sqcup_{i \in I}~G/H_i,~\{z_i \}_{i \in I},~\{r_i \}_{i \in I}, ~~\{s_i \}_{i \in I})$. The next result is a Legendrian rack analogue of \cite[Proposition, Section 2.4]{Joyce1979}.
	
	\begin{theorem}\label{thm homogenous representation}
		Every $\GL$-rack is isomorphic to a $\GL$-rack of the form \\
		$(\sqcup_{i \in I}~G/H_i,~\{z_i \}_{i \in I},~\{r_i \}_{i \in I}, ~~\{s_i \}_{i \in I})$.
	\end{theorem}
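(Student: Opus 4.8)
The plan is to adapt Joyce's representation theorem for quandles to the $\GL$-rack setting, using the inner automorphism group together with the action of the Legendrian maps $\U$ and $\D$. First I would let $(X,\ast,\U,\D)$ be an arbitrary $\GL$-rack and take $G=\Aut(X,\ast,\U,\D)$, the group of $\GL$-rack automorphisms, or more precisely the subgroup generated by the inner automorphisms $S_y$ together with $\U$ and $\D$ viewed as automorphisms (justified by Proposition \ref{u,d an auto}(1)). I would let $I=X$ be the indexing set, so that each element $x\in X$ corresponds to an orbit/coset. For each $x\in X$, set $z_x=S_x$, and let $H_x$ be the stabiliser-type subgroup consisting of those automorphisms $g\in G$ fixing $x$ and commuting appropriately with $z_x$; the containment $H_x\le \C_G(z_x)$ should follow because an automorphism fixing $x$ conjugates $S_x$ to $S_{g(x)}=S_x$.

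Next I would define the distinguished elements $r_x$ and $s_x$ and the bijection $\tau$. The natural choice is $\tau=\U$ as a permutation of $X=I$ with inverse $\mu=\D$ (using Proposition \ref{u,d an auto}(3), which gives $\D\U=\U\D$, so these genuinely are mutually inverse up to the identity $\U\D=\id$ only in the quandle case; in general $\tau$ should be the permutation of $X$ induced by $\U$ and $\mu$ that induced by $\D$, and one must check $\tau\mu=\mu\tau=\id$ as maps on $I$). I would then set $r_x=\U$ and $s_x=\D$ (the automorphisms), so that $\mathtt{u}_r$ and $\mathtt{d}_s$ recover the action of $\U$ and $\D$ on cosets. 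The six hypotheses of Proposition \ref{legendrian rack on coset} would then be verified by translating the $\GL$-rack axioms: conditions (1)--(2) are normality-type conditions coming from $\U,\D$ being automorphisms; conditions (3)--(4) encode axioms (L1) and (L1$'$) (namely $\U\D(x\ast x)=x$ and $\D\U(x\ast x)=x$); and conditions (5)--(6) encode the compatibility of $\U$ and $\D$ with the inner automorphisms, coming from axioms (L2), (L2$'$), (L3), (L3$'$).

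The main construction is the isomorphism itself. I would define $\Psi:X\to \sqcup_{x\in I}\,G/H_x$ by choosing, within each orbit of the inner automorphism action, a basepoint and sending a general element to the appropriate coset $gH_x$ where $g$ carries the basepoint to that element. One checks $\Psi$ is a bijection onto the disjoint union of coset spaces (each orbit corresponds to one index, and the transitive $G$-action on an orbit identifies it with $G/H_x$), that $\Psi$ intertwines $\ast$ with the coset rack operation (this is exactly the classical Joyce computation, using $xH_i\ast yH_j=yz_jy^{-1}xH_i$), and that $\Psi$ intertwines $\U$ with $\mathtt{u}_r$ and $\D$ with $\mathtt{d}_s$ (immediate from the choices $r_x=\U$, $s_x=\D$).

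The hard part will be pinning down the group $G$ and the subgroups $H_i$ so that all six hypotheses of Proposition \ref{legendrian rack on coset} hold \emph{simultaneously} and the map $\Psi$ is genuinely well-defined and bijective. The subtlety is that $\U$ and $\D$ permute the orbits of the inner automorphism group, so the indexing set $I$, the bijection $\tau$, and the family $\{H_i\}$ must be chosen coherently with respect to this permutation action; in particular one must verify that $r_i$ and $s_i$ conjugate $H_i$ into $H_{\tau(i)}$ and $H_{\mu(i)}$ respectively rather than back into $H_i$, which is where conditions (1)--(2) genuinely use that $\U,\D$ need not fix each orbit. Verifying the coset identities $z_i r_i s_{\tau(i)}\in H_i$ and $z_i s_i r_{\mu(i)}\in H_i$ from axioms (L1) and (L1$'$) is the linchpin, and getting the bookkeeping of indices $\tau(i),\mu(i)$ correct is the chief obstacle; everything else is routine verification of homomorphism and bijectivity properties mirroring Joyce's original argument.
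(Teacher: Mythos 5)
Your overall strategy is the same as the paper's (a Joyce-style coset representation over $G=\Aut(X,\ast,\U,\D)$ with $z_i=S_{p_i}$ and $H_i$ the stabiliser of a basepoint), but as written there are two genuine gaps in the set-up, both of which you partially flag but do not resolve. First, the indexing set cannot be $I=X$: in Proposition \ref{legendrian rack on coset} each index contributes an entire coset space $G/H_i$, so $\sqcup_{i\in I}\,G/H_i$ would be far larger than $X$ and no bijection onto $X$ is possible. The paper takes $I$ to be the set of $G$-orbits of $X$, fixes one basepoint $p_i$ in each orbit $X_i$, and sets $H_i=\mathrm{Stab}_G(p_i)$, $z_i=S_{p_i}$; the representation map is then the orbit map $\xi H_j\mapsto\xi(p_j)$, which is a bijection precisely because there is one coset space per orbit. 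Your later description of $\Psi$ (``each orbit corresponds to one index'') is the correct picture, but it contradicts your declaration $I=X$.

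Second, and more seriously, with $I=X$ and $\tau=\U$, $\mu=\D$ the required identity $\tau\mu=\mu\tau=\id_I$ fails for every $\GL$-rack that is not a quandle, since $\U\D=\id_X$ characterises the quandle case. You notice this but leave it unresolved. The resolution is exactly the passage to orbits: from (L1), (L2$'$) and (L2) one gets $x=\U\D(x\ast x)=\U\D(x)\ast x=S_x(\U\D(x))$, so $\U\D(x)$ always lies in the $G$-orbit of $x$; hence the maps induced by $\U$ and $\D$ on the orbit set are genuinely mutually inverse even though $\U\D\neq\id_X$. This computation is the linchpin you yourself identify as the chief obstacle but do not supply. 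Relatedly, the paper does not take $r_i=\U$ and $s_i=\D$ outright; it chooses $\phi_i,\psi_i\in G$ with $\phi_i(p_{\tau(i)})=\U(p_i)$ and $\psi_i(p_{\mu(i)})=\D(p_i)$, so that conditions (1)--(6) of Proposition \ref{legendrian rack on coset} reduce to Proposition \ref{u,d an auto}(2) and axioms (L1)--(L3$'$) evaluated at the basepoints. Your choice $r_i=\U$, $s_i=\D$ can only be made to work after observing that $\U$ and $\D$ are themselves $\GL$-rack automorphisms (they commute with each other by Proposition \ref{u,d an auto}(3)), hence lie in $G$ and preserve every orbit; that observation is absent from your argument, and without it the bookkeeping of $\tau(i)$ versus $i$ does not close up.
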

	
	\begin{proof}
		Let $(X, \ast, \mathtt{u}, \mathtt{d})$ be a $\GL$-rack and $G=\Aut(X, \ast, \mathtt{u}, \mathtt{d})$ its group of Legendrian automorphisms. Let $X= \sqcup_{i \in I}~X_i$ be the decomposition of $X$ into $G$-orbits. Noting that the Legendrian map $\mathtt{u}$ permutes the orbits of $X$, we obtain a map $\tau: I \to I$ such that $i \mapsto \tau(i)$ if and only if $\mathtt{u}(X_i) \subseteq X_{\tau(i)}$. Similarly, since $\mathtt{d}$ permutes the orbits of $X$, we obtain a map $\mu: I \to I$ such that $i \mapsto \mu(i)$ if and only if $\mathtt{d}(X_i) \subseteq X_{\mu(i)}$. If $x \in X$, then axiom (L2) gives $\mathtt{d}(x*x)=\mathtt{d}(x)*x$. Applying $\mathtt{u}$ and using axiom (L1) gives $$x=\mathtt{u}\mathtt{d}(x*x)=\mathtt{u}(\mathtt{d}(x)*x)=\mathtt{u}\mathtt{d}(x)*x=S_x(\mathtt{u}\mathtt{d}(x)).$$ Since $S_x \in G$, it follows that $x$ and $\mathtt{u}\mathtt{d}(x)$ are in the same orbit, that is, the composition $\mathtt{u}\mathtt{d}$ preserves orbits of $X$. This implies that $\tau \mu=\id_I= \mu \tau$. For each $i \in I$, fix elements $p_i \in X_i$ and $\phi_i, \psi_i \in G$ such that 
		$$\phi_i(p_{\tau(i)})=\mathtt{u}(p_i) \quad \textrm{and} \quad \psi_i(p_{\mu(i)})=\mathtt{d}(p_i).$$
		We set $r_i=\phi_i$, $s_i=\psi_i$, $z_i = S_{p_i}$ and $H_i$ the stabiliser of $p_i$ in $G$. Then, for each $i \in I$,  $\xi \in H_i$ and  $x \in X$, we have
		$$S_{p_i} \xi (x)= \xi (x)* p_i=  \xi (x)* \xi(p_i)= \xi(x*p_i)= \xi S_{p_i}(x),$$ and hence we obtain the rack $(\sqcup_{i \in I}~G/H_i,~\{z_i \}_{i \in I})$.
		For any $\xi \in H_i$, we have
		$$\phi_i^{-1} \xi \phi_i(p_{\tau(i)})=\phi_i^{-1} \xi \mathtt{u}(p_i)= \phi_i^{-1} \mathtt{u} \xi(p_i)= \phi_i^{-1} \mathtt{u} (p_i)=p_{\tau(i)},$$
		and hence $\phi_i^{-1} \xi \phi_i \in H_{\tau(i)}$. Similarly, one can show that $\psi_i^{-1} \xi \psi_i \in H_{\mu(i)}$. It follows from axiom (L1) that
		$$ S_{p_i}\phi_i \psi_{\tau(i)}(p_i)=S_{p_i}\phi_i \mathtt{d}(p_{\tau(i)})=S_{p_i} \mathtt{d} \phi_i (p_{\tau(i)})= S_{p_i} \mathtt{d} \mathtt{u} (p_i)= \mathtt{d}\mathtt{u} (p_i)*p_i=p_i,$$
		and hence $S_{p_i}\phi_i \psi_{\tau(i)}  \in H_i$. Similarly, it follows that $S_{p_i}\psi_i \phi_{\mu(i)}  \in H_i$. Also, axiom (L3)  implies that 
		$$\phi_i S_{p_{\tau(i)}}(x)= \phi_i (x*p_{\tau(i)})=\phi_i (x)*\phi_i(p_{\tau(i)})=\phi_i (x)*\mathtt{u}(p_i)= \phi_i (x)*p_i= S_{p_i} \phi_i(x)$$
		and  $\psi_i S_{p_{\mu(i)}}(x)= S_{p_i} \psi_i(x)$ for all $x \in X$. Hence,  Proposition \ref{legendrian rack on coset} yields the Legendrian rack $(\sqcup_{i \in I}~G/H_i,~\{z_i \}_{i \in I},~\{r_i \}_{i \in I},~ \{s_i \}_{i \in I})$. It can be seen that the orbit map 
		$$\Theta:\sqcup_{i \in I}~G/H_i \rightarrow X$$ given by $\Theta(\xi H_j)= \xi (p_j)$ is an isomorphism of racks (see \cite[Section 2.4]{Joyce1979}). Furthermore, for any $\xi H_j \in \sqcup_{i \in I}~G/H_i$, we have
		$$\Theta \mathtt{u}_r(\xi H_j)=\Theta(\xi\phi_j H_{\tau(j)})=\xi\phi_j(p_{\tau(j)})= \xi \mathtt{u}(p_j)=\mathtt{u} \xi(p_j)= \mathtt{u} \Theta (\xi H_j)$$
		and 	$\Theta \mathtt{d}_s(\xi H_j)= \mathtt{d} \Theta (\xi H_j)$. Thus, $\Theta$ is an isomorphism of $\GL$-racks.
	\end{proof}
	\medskip


\section{Modules over $\GL$-racks} \label{sec GL modules}
This section is devoted to identifying appropriate coefficient objects for constructing (co)homology theories for $\GL$-racks. Introduced by Beck in \cite{MR2616383}, these coefficient objects are commonly known as \textit{Beck modules} in the literature. More precisely, given a category $\mathbf{C}$ and an object $X$ in $\mathbf{C}$, a Beck module over $X$ is an abelian group object in the slice category $\mathbf{C}/X$ over $X$. To exemplify their strength, we note that they simultaneously generalise the notion of coefficient modules from group cohomology, Lie algebra cohomology, as well as Hochschild cohomology of associative algebras. To avoid confusion, from now onwards, we shall write the maps $\U$ and $\D$ of a $\GL$-rack $(X, \ast,\U,\D)$ by $\U_X$ and $\D_X$, respectively.
	\par
We begin by recalling some basic concepts from the theory of trunks, which was introduced by Fenn, Rourke, and Sanderson in \cite{MR1364012}. A \textit{trunk} $\T$ is an object analogous to a category which consists of a class of objects and a set $\Hom_{\T}(A,B)$ of morphisms along with a number of commutative squares
	\begin{center}
		\begin{tikzcd}[column sep=3em]
			A \arrow[r, "f"] \arrow[d, "g"] & B \arrow[d, "h"] \\
			C \arrow[r, "i"] & D,
		\end{tikzcd}
	\end{center}
	called {\it preferred squares}. Given two trunks $\Sa$ and $\T$, a \textit{trunk map} $F:\Sa \rightarrow \T$ is a map which assigns to each object $A$ of $\Sa$, an object $F(A)$ of $\T$, and to every morphism $f:A \rightarrow B$ of $\Sa$, a morphism $F(f):F(A) \rightarrow F(B)$ of $\T$ such that the preferred square
	\begin{center}
		\begin{tikzcd}[column sep=3em]
			A \arrow[r, "f"] \arrow[d, "g"] & B \arrow[d, "h"] \\
			C \arrow[r, "i"] & D
		\end{tikzcd}
	\end{center}
	is mapped to the preferred square
	\begin{center}
		\begin{tikzcd}[column sep=3em]
			F(A) \arrow[r, "F(f)"] \arrow[d, "F(g)"] & F(B) \arrow[d, "F(h)"] \\
			F(C) \arrow[r, "F(i)"] & F(D).
		\end{tikzcd}
	\end{center}
	
	For example, for any category $\C$, we have a well-defined trunk $\T(\C)$, which has the same objects and morphisms as $\C$, and whose preferred squares are the commutative diagrams in $\C$. In particular, we denote the trunk associated to the category of abelian groups by $\Ab$.
	
	Given a $\GL$-rack $(X, \ast,\U_X,\D_X)$, we define a trunk $\T(X, \ast,\U_X,\D_X)$ as follows: 
	\begin{itemize}
		\item  for each $x \in X$, $\T(X, \ast,\U_X,\D_X)$ has precisely one object,
		\item for each $x \in X$, there are morphisms $\mu_x:x \rightarrow \U_X(x)$, $\partial_x:x \rightarrow \D_X(x)$ and $\id_x: x \rightarrow x$,
		\item for each ordered pair $(x,y)$ of elements of $X$, there are morphisms $\alpha_{x,y}:x \rightarrow x \ast y$ and $\beta_{x,y}:y \rightarrow x *y$ such that the following are preferred squares for all $x,y,z \in X$:
		\begin{center}
			\hspace{-0.3cm}
			\begin{tikzcd}[row sep=3em, column sep=5em]
				x \arrow[r, "\alpha_{x,y}", shorten >=2pt, shorten <=2pt] \arrow[d, "\alpha_{x,z}"', shorten >=0pt, shorten <=0pt] & x \ast y \arrow[d, "\alpha_{x \ast y,z}"', shorten >=0pt, shorten <=0pt] \\
				x \ast z \arrow[r, "\alpha_{x\ast z, y \ast z}"'] & (x\ast y) \ast z
			\end{tikzcd}
			\hspace{2cm}
			\begin{tikzcd}[row sep=3em, column sep=5em]
				y \arrow[r, "\beta_{x,y}", shorten >=2pt, shorten <=2pt] \arrow[d, "\alpha_{y,z}"', shorten >=0pt, shorten <=0pt] & x \ast y \arrow[d, "\alpha_{x \ast y,z}", shorten >=0pt, shorten <=0pt] \\
				y \ast z \arrow[r,"\beta_{x\ast z, y \ast z}"',shorten >=-4pt, shorten <=-4pt] & 
				(x \ast y)\ast z
			\end{tikzcd}
		\end{center}
		\medskip
		\begin{center}
			\begin{tikzcd}[row sep=3em, column sep=5em]
				x \arrow[r, "\alpha_{x,y}", shorten >=2pt, shorten <=2pt] \arrow[d, "\mu_x"', shorten >=0pt, shorten <=0pt] & x \ast y \arrow[d, "\mu_{x \ast y}", shorten >=0pt, shorten <=0pt] \\
				\U_X(x) \arrow[r, "\alpha_{\U_X(x), y}"'] & 
				\U_X(x \ast y)
			\end{tikzcd}
			\hspace{2cm}
			\begin{tikzcd}[row sep=3em, column sep=5em]
				x \arrow[r, "\alpha_{x,y}", shorten >=2pt, shorten <=2pt] \arrow[d, "\partial_x"', shorten >=0pt, shorten <=0pt] & x \ast y \arrow[d, "\partial_{x \ast y}", shorten >=0pt, shorten <=0pt] \\
				\D_X(x) \arrow[r, "\alpha_{\D_X(x), y}"'] & 
				\D_X(x \ast y)
			\end{tikzcd}
		\end{center}
		\medskip
		
		\begin{center}
			\hspace{0.4 cm}
			\begin{tikzcd}[row sep=3em, column sep=6em]
				x \arrow[r, "\alpha_{x,\U_X(y)}", shorten >=2pt, shorten <=2pt] \arrow[d, "\id_x"', shorten >=0pt, shorten <=0pt] & x \ast \U_X(y) \arrow[d, "\id_{x \ast \U_X(y)}", shorten >=0pt, shorten <=0pt] \\
				x \arrow[r, "\alpha_{x, y}"'] & 
				x \ast y
			\end{tikzcd}
			\hspace{1.5 cm}
			\begin{tikzcd}[row sep=3em, column sep=6em]
				x \arrow[r, "\alpha_{x,\D_X(y)}", shorten >=2pt, shorten <=2pt] \arrow[d, "\id_x"', shorten >=0pt, shorten <=0pt] & x \ast \D_X(y) \arrow[d, "\id_{x \ast \D_X(y)}", shorten >=0pt, shorten <=0pt] \\
				x \arrow[r, "\alpha_{x, y}"'] & 
				x \ast y
			\end{tikzcd}
		\end{center}
		\medskip
		\begin{center}
			\hspace{-0.5cm}
			\begin{tikzcd}[row sep=3em, column sep=4.5em]
				y \arrow[r, "\beta_{x,y}", shorten >=2pt, shorten <=2pt] \arrow[d, "\beta_{\U_X(x),y}"', shorten >=0pt, shorten <=0pt] & x \ast y \arrow[d, "\mu_{x \ast y}", shorten >=0pt, shorten <=0pt] \\
				\U_X(x) \ast y \arrow[r, "\id_{\U_X(x \ast y)}"'] & 
				\U_X(x \ast y)
			\end{tikzcd}
			\hspace{1.7 cm}
			\begin{tikzcd}[row sep=3em, column sep=4.5em]
				y \arrow[r, "\beta_{x,y}", shorten >=2pt, shorten <=2pt] \arrow[d, "\beta_{\D_X(x),y}", shorten >=0pt, shorten <=0pt] & x \ast y \arrow[d, "\partial_{x \ast y}", shorten >=0pt, shorten <=0pt] \\
				\D_X(x) \ast y \arrow[r, "\id_{\D_X(x \ast y)}"'] & 
				\D_X(x \ast y)
			\end{tikzcd}
		\end{center}
		\medskip
		
		\begin{center}
			\hspace{0.5cm}
			\begin{tikzcd}[row sep=3em, column sep=5.5em]
				y \arrow[r, "\mu_{y}", shorten >=2pt, shorten <=2pt] \arrow[d, "\beta_{x,y}"', shorten >=0pt, shorten <=0pt] & \U_X(y) \arrow[d, "\beta_{x,\U_X(y)}", shorten >=0pt, shorten <=0pt] \\
				x \ast y \arrow[r, "\id_{x\ast y}"'] & 
				x \ast \U_X(y) 
			\end{tikzcd}
			\hspace{1.6 cm}
			\begin{tikzcd}[row sep=3em, column sep=5.5em]
				y \arrow[r, "\partial_{y}", shorten >=2pt, shorten <=2pt] \arrow[d, "\beta_{x,y}"', shorten >=0pt, shorten <=0pt] & \D_X(y) \arrow[d, "\beta_{x,\D_X(y)}", shorten >=0pt, shorten <=0pt] \\
				x \ast y \arrow[r, "\id_{x\ast y}"'] & 
				x \ast \D_X(y) 
			\end{tikzcd}
		\end{center}
		
	\end{itemize}
	
	Thus, a trunk map $A:\T(X, \ast,\U_X,\D_X) \rightarrow \Ab$ yields abelian groups $A_x$ and group homomorphisms $\phi_{x,y}:A_x \rightarrow A_{x \ast y}$, $\psi_{x,y}:A_y \rightarrow A_{x \ast y}$, $\zeta_x:A_x \rightarrow A_{\U_X(x)}$ and $\Omega_x:A_x \rightarrow A_{\D_X(x)}$ such that:
	\begin{enumerate}[(M1)]
		\item [(M1)] \label{M1} $\phi_{x \ast y,z}\phi_{x,y}=\phi_{x \ast z,y \ast z}\phi_{x,z}$,
		\item [(M2)] \label{M2} $\phi_{x \ast y,z}\psi_{x,y}=\psi_{x \ast z,y \ast z}\phi_{y,z}$,
		\item [(M3)] \label{M3} $\zeta_{x \ast y} \phi_{x,y}= \phi_{\U_X(x),y}\zeta_x$,
		\item [(M4)] \label{M4} $\Omega_{x \ast y} \phi_{x,y}= \phi_{\D_X(x),y}\Omega_x$,
		\item [(M5)] \label{M5} $\phi_{x,\U_X(y)}=\phi_{x,y}$,
		\item [(M6)]\label{M6} $\phi_{x,\D_X(y)}=\phi_{x,y}$,
		\item [(M7)]\label{M7} $\psi_{\U_X(x),y}=\zeta_{x \ast y}\psi_{x,y}$,
		\item [(M8)]\label{M8} $\psi_{\D_X(x),y}=\Omega_{x \ast y}\psi_{x,y}$,
		\item [(M9)]\label{M9}$\psi_{x,\U_X(y)} \zeta_{y}=\psi_{x,y}$,
		\item [(M10)]\label{M10}$\psi_{x,\D_X(y)} \Omega_{y}=\psi_{x,y}$,
	\end{enumerate}
	for all $x,y,z \in X$. We denote such a trunk map by $\mathscr{F}=(A,\phi,\psi,\zeta,\Omega)$.
	
	\begin{defn}
		Let $(X, \ast,\U_X,\D_X)$ be a $\GL$-rack. Then, an {\it $(X, \ast,\U_X,\D_X)$-module} is a trunk map $\mathscr{F}=(A,\phi,\psi,\zeta,\Omega): T(X, \ast,\U_X,\D_X) \rightarrow \Ab$ such that $\phi_{x,y}: A_{x}\rightarrow A_{x\ast y}$ is an isomorphism and 
		\begin{enumerate}
			\item[(M11)] \label{M11} $\psi_{x\ast y,z}(a)= \phi_{x\ast z,y\ast z}\psi_{x,z}(a)+\psi_{x\ast z,y\ast z}\psi_{y,z}(a)$,
			\item[(M12)] \label{M12} $\zeta_{\D_X(x\ast x)}(\Omega_{x \ast x}(\phi_{x,x}(b) + \psi_{x,x}(b)))=b $,
			\item[(M13)] \label{M13} $\Omega_{\U_X(x\ast x)}(\zeta_{x \ast x}(\phi_{x,x}(b) + \psi_{x,x}(b)))=b $
		\end{enumerate}
		hold for all $a \in A_z$, $b \in A_x$ and $x,y,z \in X$. 
		\par		
	\end{defn}
	
	\begin{defn}
		Let $(X, \ast,\U_X,\D_X)$ be a $\GL$-rack. An $(X, \ast,\U_X,\D_X)$-module  $\mathscr{F}=(A,\phi,\psi,\zeta,\Omega)$ is called \textit{homogeneous} if the constituent groups are all isomorphic, that is, $A_x \cong A_y$ for all $x,y \in X$.
	\end{defn}		
	
	Before proceeding further, let us examine some examples of modules over $\GL$-racks.
	
\begin{example}
Suppose that $(X, \ast,\U_X,\D_X)$ is a $\GL$-rack.
\begin{enumerate}
			\item Any abelian group $A$ can be considered as an $(X, \ast,\U_X,\D_X)$-module by taking $A_x=A$ $\phi_{x,y}=\id_A$, $\psi_{x,y}=0$, $\zeta_x=\id_A$ and $\Omega_x=\id_A$ for all $x,y \in X$, which we call a \textit{trivial homogeneous module}.
			\item \label{example-module} Consider a family of abelian groups $\{ G_{i}\}_{i\in I}$. Let $\alpha_i, \beta_i, \gamma_i \in \Aut(G_i)$ such that  $\alpha_{i}\beta_{i}\gamma_i=\id_{G_i}$ and $\alpha_{i},\, \beta_{i},\,\gamma_i$ commute with each other for all $i \in I$. Then, $\mathscr{F}=(A,\phi,\psi,\zeta,\Omega)$ is an $(X, \ast,\U_X,\D_X)$-module, where
			$A_x = \prod_{i\in I} G_i $, $\phi_{x,y}= \prod_{i \in I} \alpha_i$,  $\psi_{x,y} = 0$, $\zeta_{x}=\prod_{i \in I} \beta_i$  and $\Omega_{x}=\prod_{i \in I} \gamma_i$ for all $x \in X$.
			\item  Let $A$ be an abelian group. Then, $\mathscr{F}=(A,\phi,\psi,\eta)$ is an $(X, \ast,\U_X,\D_X)$-module, where $A_x = A$, $\phi_{x,y}(a)=-a$, $\psi_{x,y}(a) = 2a$, $\zeta_{x}(a)=a$ and $\Omega_{x}(a)=a$ for all $x \in X$ and $a \in A$.
		\end{enumerate}
	\end{example}
	
	\begin{defn}\label{map}
		Let $(X, \ast,\U_X,\D_X)$ be a $\GL$-rack, and $\mathscr{F}=(A,\phi,\psi,\zeta,\Omega)$ and $\mathscr{F'}=(A',\phi',\psi',\zeta',\Omega')$ be $(X, \ast,\U_X,\D_X)$-modules. An \textit{$(X, \ast,\U_X,\D_X)$-map} is a natural transformation $f: \mathscr{F}\rightarrow \mathscr{F}'$ of trunk maps, that is, it is a collection  $f=\{f_x: A_x\rightarrow A'_x \mid  x \in X\}$ of group homomorphisms such that
		\begin{eqnarray}
			\phi'_{x,y}f_x &=& f_{x\ast y} \phi_{x,y},\label{X-map-1}\\
			\psi'_{x,y}f_y &=& f_{x\ast y}\psi_{x,y},\label{X-map-2}\\
			\zeta'_{x}f_x &=& f_{\U_X(x)} \zeta_{x},\label{X-map-3}\\
			\Omega'_{x}f_x &=& f_{\D_X(x)} \Omega_{x}\label{X-map-4},
		\end{eqnarray}
		for all $x,y \in X$.
		\par
		In addition, if each $f_x$ is an isomorphism of groups, then we say that $f: \mathscr{F}\rightarrow \mathscr{F}'$  is an isomorphism of  $(X, \ast,\U_X,\D_X)$-modules. 
	\end{defn}
	
	\begin{remark}
		If $(X, \ast,\U_X,\D_X)$ is a $\GL$-rack, then we can form the category $\mathbf{GLMod}_{(X, \ast,\U_X,\D_X)}$, whose objects are $(X, \ast,\U_X,\D_X)$-modules and whose morphisms are $(X, \ast,\U_X,\D_X)$-maps. 
	\end{remark}
	\medskip
	

	\section{Category of modules over $\GL$-racks}\label{sec Category of modules over GL-racks}
	In this section, we describe the objects in the category $\Ab(\mathbf{GL}|_{(X, \ast,\U_X,\D_X)})$ of abelian group objects in the slice category $\mathbf{GL}|_{(X, \ast,\U_X,\D_X)}$ over a fixed $\GL$-rack $(X, \ast,\U_X,\D_X)$. Let $(X, \ast,\U_X,\D_X)$ be a $\GL$-rack and $\mathscr{F}=(A,\phi,\psi,\zeta,\Omega)$ an $(X, \ast,\U_X,\D_X)$-module. We define the \textit{semi-direct product} $\mathscr{F} \rtimes X$ of $\mathscr{F}$ and $X$ to be the set
	$$ \big\{(a,x) \mid x\in X, \,  a \in A_x \big\}$$ 
	equipped with the binary operation
	\begin{equation}
		(a,x) \tilde{\ast} (b,y):=\big(\phi_{x,y}(a)+\psi_{x,y}(b), \, x \ast y \big),
	\end{equation}
	and the maps $\U_{\mathscr{F} \rtimes X},\D_{\mathscr{F} \rtimes X}:\mathscr{F} \rtimes X \rightarrow \mathscr{F} \rtimes X$ are defined by 
	$$\U_{\mathscr{F} \rtimes X}\big((a,x) \big)= \big(\zeta_x(a), \U_X(x)\big)$$ and 	$$\D_{\mathscr{F} \rtimes X}\big((a,x) \big)= \big(\Omega_x(a), \D_X(x)\big)$$
	for all $a \in A_x$,  $b \in A_y$ and $x,y \in X$.
	
	\begin{prop}\label{semi-direct product}
		Let $(X, \ast,\U_X,\D_X)$ be a $\GL$-rack and $\mathscr{F}=(A,\phi,\psi,\zeta,\Omega)$ an $(X, \ast,\U_X,\D_X)$-module. Then the semi-direct product $({\mathscr{F} \rtimes X},\tilde{\ast},\U_{\mathscr{F} \rtimes X},\D_{\mathscr{F} \rtimes X})$ is a $\GL$-rack. 
	\end{prop}
	
	\begin{proof}
		Note that ${\mathscr{F} \rtimes X}$ is a rack, as proved in \cite[Proposition 2.1]{MR2155522}. It remains to check that the maps $\U_{\mathscr{F} \rtimes X}$ and $\D_{\mathscr{F} \rtimes X}$ indeed give rise to a $\GL$-rack structure on ${\mathscr{F} \rtimes X}$.
		\begin{itemize}
			\item For (L1), we see that $$\U_{\mathscr{F} \rtimes X}\D_{\mathscr{F} \rtimes X}((a,x)\tilde{\ast}(a,x))=(\zeta_{\D_X(x \ast x)}(\Omega_{x \ast x}(\phi_{x,x}(a)+\psi_{x,x}(a))), \,\U_X\D_X(x\ast x))=(a,x) ,$$ where the last equality follows from the axiom \hyperref[M12]{(M12)}.
			\item For (L2), we have,
			\begin{eqnarray*}
				\U_{\mathscr{F} \rtimes X}\big((a,x)\big) \tilde{\ast} (b,y)&=&(\zeta_x(a), \,\U_X(x)) \tilde{\ast}(b,y)\\
				&=&(\phi_{\U_X(x),y}(\zeta_x(a))+\psi_{\U_X(x),y}(b), \, \U_X(x) \ast y)\\
				&=& (\zeta_{x \ast y}(\phi_{x,y}(a)+\psi_{x,y}(b)),\, \U_X(x \ast y)),\\
				&& \textrm{by axioms \hyperref[M3]{(M3)} and \hyperref[M7]{(M7)}}\\
				&=& \U_{\mathscr{F} \rtimes X}(\phi_{x,y}(a)+\psi_{x,y}(b), \, x \ast y)\\
				&=& \U_{\mathscr{F} \rtimes X} \big((a,x) \tilde{\ast}(b,y) \big).
			\end{eqnarray*}
			\item For (L3), we have 
			\begin{eqnarray*}
				(a,x) \tilde{\ast} \U_{\mathscr{F} \rtimes X} \big((b,y)\big) &=& (a,x) \tilde{\ast} (\zeta_y(b), \,\U_X(y))\\
				&=& (\phi_{x, \U_X(y)}(a)+\psi_{x,\U_X(y)}(\zeta_y(b)),\, x \ast \U_X(y)),\\
				&=& (\phi_{x,y}(a)+\psi_{x,y}(b), \,x \ast y),\\
				&& \textrm{by axioms \hyperref[M5]{(M5)} and \hyperref[M9]{(M9)}}\\
				&=& (a,x)\tilde{\ast}(b,y).
			\end{eqnarray*}
		\end{itemize}
The verifications corresponding to (L1$^{\prime}$),(L2$^{\prime}$) and (L3$^{\prime}$) are similar to those of (L1),(L2) and (L3), respectively. Thus, the semi-direct product $({\mathscr{F} \rtimes X},\tilde{\ast}, \U_{\mathscr{F} \rtimes X},\D_{\mathscr{F} \rtimes X})$ is a $\GL$-rack. 
	\end{proof}
	
	\begin{prop}\label{abelian-group-object}
		Let $(X, \ast,\U_X,\D_X)$ be a $\GL$-rack and $\mathscr{F}=(A,\phi,\psi,\zeta,\Omega)$ an  $(X, \ast,\U_X,\D_X)$-module. Then there exists an abelian group object $p:({\mathscr{F} \rtimes X},\tilde{\ast},\U_{\mathscr{F} \rtimes X},\D_{\mathscr{F} \rtimes X}) \rightarrow (X, \ast,\U_X,\D_X)$, which we denote by $\mathcal{T}(\mathscr{F})$, in the slice category over $(X, \ast,\U_X,\D_X)$. Further, let  $\mathscr{F'}=(A',\phi',\psi',\eta')$ be another $(X, \ast,\U_X,\D_X)$-module and $f:\mathscr{F}\rightarrow \mathscr{F'}$ an $(X, \ast,\U_X,\D_X)$-map. Then the map $\mathcal{T}(f): \mathscr{F}\rtimes X \rightarrow \mathscr{F'}\rtimes X$ defined by $\mathcal{T}(f)\big((a,x) \big)= (f_{x}(a),x)$ gives a slice morphism. Moreover, $\mathcal{T}: \mathbf{GLMod}_{(X, \ast,\U_X,\D_X)} \rightarrow \Ab(\mathbf{GL}|_{(X, \ast,\U_X,\D_X)})$ is a functor.
	\end{prop}
	
\begin{proof}
Since $\mathscr{F}=(A,\phi,\psi,\zeta,\Omega)$ is an $(X, \ast,\U_X,\D_X)$-module,  by Proposition \ref{semi-direct product}, the semi-direct product $({\mathscr{F} \rtimes X},\tilde{\ast},\U_{\mathscr{F} \rtimes X},\D_{\mathscr{F} \rtimes X})$ is a $\GL$-rack. Let $p: \mathscr{F} \rtimes X \rightarrow X$ be the natural projection given as $p(a,x)= x$ for $a \in A_x$ and $x \in X$. Then $p$ is a homomorphism of $\GL$-racks. We claim that $p: \mathscr{F} \rtimes X \rightarrow X$ has the canonical structure of an abelian group object. Let $$\big((\mathscr{F} \rtimes X )\times_X (\mathscr{F} \rtimes X),\star, \U_{(\mathscr{F} \rtimes X )\times_X (\mathscr{F} \rtimes X)},\D_{(\mathscr{F} \rtimes X )\times_X (\mathscr{F} \rtimes X)}\big) \rightarrow (X, \ast,\U_X,\D_X)$$ be the categorical product of $p: \mathscr{F} \rtimes X \rightarrow X$ with itself in the slice category over $(X, \ast,\U_X,\D_X)$. The $\GL$-rack structure $\star$ is defined in the same way as in the cartesian product. We define the morphisms
		\begin{itemize}
			\item $\mu:(\mathscr{F} \rtimes X )\times_X (\mathscr{F} \rtimes X) \rightarrow (\mathscr{F} \rtimes X) $ given by $((a_1,x),(a_2,x)) \mapsto (a_1+a_2,x)$,
			\item $\sigma:X \rightarrow (\mathscr{F} \rtimes X)$ given by $x \mapsto (0,x)$,
			\item $\nu:(\mathscr{F} \rtimes X) \rightarrow (\mathscr{F} \rtimes X)$ given by $(a,x) \mapsto (-a,x)$,
		\end{itemize}
		for all $a_1,a_2, a \in A_x$ and $x \in X$. It easy to check that $\mu$, $\sigma$ and $\nu$ are $\GL$-rack homomorphisms. Also, the fact that $\mu$, $\sigma$ and $\nu$ are slice morphisms and that they turn $p: \mathscr{F} \rtimes X \rightarrow X$ into an abelian group object follows directly (see also \cite[Theorem 2.2]{MR2155522}).
		\par	
		
		Let us define a functor $\mathcal{T}: \mathbf{GLMod}_{(X, \ast,\U_X,\D_X)} \rightarrow \Ab(\mathbf{GL}|_{(X, \ast,\U_X,\D_X)})$.  Given an $(X, \ast,\U_X,\D_X)$-module $\mathscr{F}=(A,\phi,\psi,\zeta,\Omega)$, let $\mathcal{T}(\mathscr{F})$ denote the object $p:\mathscr{F}\rtimes X\rightarrow X$. Let $\mathscr{F'}=(A',\phi',\psi',\zeta',\Omega')$ be another $(X, \ast,\U_X,\D_X)$-module and $f:\mathscr{F}\rightarrow \mathscr{F'}$ an $(X, \ast,\U_X,\D_X)$-map. As defined earlier, let $p': \mathscr{F} \rtimes X \rightarrow X$ be the natural projection given as $p'(a,x)= x$ for $a \in A^{'}_x$ and $x \in X$. Define $\mathcal{T}(f):\mathscr{F}\rtimes X \rightarrow \mathscr{F'}\rtimes X$ by $$T(f)(a,x)=(f_{x}(a),x).$$ Then, we have
		\begin{eqnarray*}
			\mathcal{T}(f) \big((a,x) \tilde{\ast} (b,y) \big)&=&\mathcal{T}(f)\big(\phi_{x,y}(a)+\psi_{x,y}(b), \,x \ast y \big)\\
			&=& \big(f_{x \ast y}(\phi_{x,y}(a)+\psi_{x,y}(b)),  \,x \ast y \big)\\
			&=&\big(\phi'_{x,y}f_x(a)+\psi'_{x,y}f_y(b),  \,x \ast y \big), \quad \textrm{since $f$ is an $(X, \ast,\U_X,\D_X)$-map}\\
			&=& \mathcal{T}(f)\big((a,x)\big) \star' \mathcal{T}(f)\big((b,y)\big),
		\end{eqnarray*}
		where $\tilde{\ast}$ is the rack operation in $\mathscr{F}\rtimes X$ and $\tilde{\star}$ is the rack operation in $\mathscr{F'}\rtimes X$. Further, since $\zeta'_{x}f_x = f_{\U_X(x)} \zeta_{x}$ and $\Omega'_{x}f_x = f_{\D_X(x)} \Omega_{x}$, the  diagrams
		
		\begin{center}
			\begin{tikzcd}[row sep=3em, column sep=4em]
				\mathscr{F}\rtimes X \arrow[r, "\U_{\mathscr{F}\rtimes X}"] \arrow[d, "\mathcal{T}(f)"] & \mathscr{F}\rtimes X \arrow[d, "\mathcal{T}(f)"] \\
				\mathscr{F'}\rtimes X \arrow[r, "\U_{\mathscr{F'}\rtimes X}"] & \mathscr{F'}\rtimes X
			\end{tikzcd}
			\hspace{1.5 cm}
			\begin{tikzcd}[row sep=3em, column sep=4em]
				\mathscr{F}\rtimes X \arrow[r, "\D_{\mathscr{F}\rtimes X}"] \arrow[d, "\mathcal{T}(f)"] & \mathscr{F}\rtimes X \arrow[d, "\mathcal{T}(f)"] \\
				\mathscr{F'}\rtimes X \arrow[r, "\D_{\mathscr{F'}\rtimes X}"] & \mathscr{F'}\rtimes X
			\end{tikzcd}
		\end{center}
		
		must commute. Thus,  $T(f)$ is a homomorphism of $\GL$-racks. Finally, since the diagram
		\begin{center}
			\begin{tikzcd}
				\mathscr{F}\rtimes X \arrow[r, "\mathcal{T}(f)"] \arrow[rd, "p"'] & \mathscr{F'}\rtimes X \arrow[d, "p'"] \\
				& X
			\end{tikzcd}
		\end{center}
		commutes, $\mathcal{T}(f)$ is a slice morphism. In fact, one can check that $\mathcal{T}$ is a functor (see \cite[Theorem 2.2]{MR2155522}).
\end{proof}

\begin{prop}\label{module}
Let $(X, \ast,\U_X,\D_X)$ be a $\GL$-rack, and $p:(Y,\star,\U_Y, \D_Y) \rightarrow (X, \ast,\U_X,\D_X)$ an abelian group object with the multiplication map $m$, the inverse map $i$, and the section $s$. Let $(X, *)$ and $(Y, \star)$ be the underlying racks. Then there exists an $(X, \ast,\U_X,\D_X)$-module $(R, \phi, \psi, \zeta, \Omega)$ given by an induced abelian group structure on the fibre $R_x:=p^{-1}(x)$ for each $x \in X$, where
		$$\phi_{x,y}:R_x \rightarrow R_{x \ast y} ~~ \text{is given by }~ u \mapsto u \star s(y),$$  
		$$\psi_{x,y}:R_y \rightarrow R_{x \ast y} ~~ \text{is given by }~ v \mapsto s(x) \star v,$$  
		$$\zeta_x: R_x \rightarrow R_{\U_Y(x)} ~~ \text{is given by }~ t \mapsto \U_{Y}(t)$$ and
		$$\Omega_x: R_x \rightarrow R_{\D_Y(x)} ~~ \text{is given by }~ t \mapsto \D_{Y}(t)$$
		for all $t,u\in R_x$, $v \in R_y$ and $x,y \in X$. Furthermore, the association gives a functor $\mathcal{S}: \Ab(\mathbf{GL}|_{(X, \ast,\U_X,\D_X)}) \to  \mathbf{GLMod}_{(X, \rho_X)}$. 
	\end{prop}
	
	\begin{proof}
		Let $p:(Y,\star,\U_Y, \D_Y) \rightarrow (X, \ast,\U_X,\D_X)$ be an abelian group object in $\Ab(\mathbf{GL}|_{(X, \ast,\U_X,\D_X)})$. Let $R_x=p^{-1}(x)$ for each $x \in X$. Then $R_x$ admits an abelian group structure given by $$u+v:=m(u,v),$$ where $s(x)$ is the identity element in $R_x$ and $-u:=i(u)$ for all $u,v \in R_x$. To define an $(X, \ast,\U_X,\D_X)$-module structure, let us define
		$$\phi_{x,y}:R_x \rightarrow R_{x \ast y} ~~\text{as} ~u \mapsto u \star s(y),$$  
		$$\psi_{x,y}:R_y \rightarrow R_{x \ast y} ~~\text{as}~ v \mapsto s(x) \star v,$$  
		$$\zeta_x: R_x \rightarrow R_{\U_X(x)} ~~ \text{as}~ t \mapsto \U_{Y}(t)$$ and
		$$\Omega_x: R_x \rightarrow R_{\D_X(x)} ~~ \text{as}~ t \mapsto \D_{Y}(t)$$
		for all $t,u\in R_x$, $v \in R_y$ and $x,y \in X$.
		It has been proved in  \cite[Theorem 2.2]{MR2155522} that $\phi_{x,y}$ and $\psi_{x,y}$ are group homomorphisms for all $x, y \in X$. For the case of $\zeta_x$, if $t_1,t_2 \in R_x$, then we see that
		\begin{eqnarray*}
			\zeta_x(t_1+t_2)&=& \U_Y(m(t_1,t_2))\\
			&=& m(\U_Y(t_1), \U_Y(t_2)), \quad \textrm{since $m \,(\U_Y \times \U_Y)=\U_Y \, m$}\\
			&=& \U_Y(t_1)+\U_Y(t_2)\\
			&=& \zeta_x(t_1)+\zeta_x(t_2).
		\end{eqnarray*}
		The verfication for $\Omega_x$ follows similarly. Next, we need to check that $\phi_{x,y}$, $\psi_{x,y}$, $\zeta_x$ and $\Omega_x$ satisfy the $(X, \ast,\U_X,\D_X)$-module identities. Since \hyperref[M1]{(M1)}, \hyperref[M2]{(M2)} and \hyperref[M11]{(M11)} are proved in \cite[Theorem 2.2]{MR2155522}, we verify only the remaining identities.
		\begin{itemize}
			
			\item Using $\GL$-rack axiom (L2), we have $$\phi_{\U_X(x),y}(\zeta_x(t))=\phi_{\U_X(x),y}(\U_Y(t))=\U_Y(t)\star s(y)=\U_Y(t \star s(y))=\zeta_{x \ast y}(t \star s(y))=\zeta_{x \ast y} (\phi_{x,y}(t)),$$ and hence \hyperref[M3]{(M3)} holds.
			\item Using the $\GL$-rack axiom (L3) and the fact that $s$ is a homomorphism of $\GL$-racks, we see that $$ \phi_{x,\U_X{(y)}}(t)=t \star s(\U_X(y))=t \star \U_Y(s(y))=t\star s(y)=\phi_{x,y}(t),$$ and hence \hyperref[M5]{(M5)} holds.
			\item Using the $\GL$-rack axiom (L2) and the fact that $s$ is a homomorphism of $\GL$-racks, we see that $$\psi_{\U_X(x),y}(v)= s(\U_X(x))\star v=\U_Y(s(x)) \star v=\U_Y(s(x)\star v)=\zeta_{x \ast y}(s(x) \star v)=\zeta_{x \ast y}(\psi_{x,y}(v)),$$ and hence \hyperref[M7]{(M7)} holds.
			\item Using the $\GL$-rack axiom (L3) we see that,
			$$\psi_{x,\U_X(y)}\zeta_y(v)=\psi_{x,\U_X(y)}\U_Y(v)=s(x)\star \U_Y(v)= s(x)\star v = \psi_{x,y}(v),$$ and hence \hyperref[M9]{(M9)} holds.
			
			\item  Note that, 
			\begin{eqnarray*}
				\zeta_{\D_X(x*x)}(\Omega_{x*x}(\phi_{x,x}(v)+\psi_{x,x}(v)))&=&\zeta_{\D_X(x*x)}(\Omega_{x*x}(v \star s(x)+s (x) \star v))\\
				&=& \U_Y \D_Y(v\star s(x)+s(x)\star v)\\
				&=& \U_Y \D_Y(m(v\star s(x),s(x)\star v))\\
				&=& \U_Y \D_Y(m(v, s(x))\tilde{\star}(s(x), v))\\
				&=& \U_Y \D_Y(m(v, s(x))\star m(s(x), v))\\
				&=& \U_Y \D_Y(v \star v)\\
				&=& v
			\end{eqnarray*} and hence \hyperref[M12]{(M12)} holds.
		\end{itemize}
		It is easy to see that \hyperref[M4]{(M4)}, \hyperref[M6]{(M6)}, \hyperref[M8]{(M8)}, \hyperref[M10]{(M10)}, and \hyperref[M13]{(M13)} hold for reasons same as in \hyperref[M3]{(M3)}, \hyperref[M5]{(M5)}, \hyperref[M7]{(M7)}, \hyperref[M9]{(M9)}, and \hyperref[M12]{(M12)}, respectively. Hence, we have proved that $(R, \phi, \psi, \zeta, \Omega)$ is an $(X, \ast,\U_X,\D_X)$-module.
		\par 
		
		Next, we construct a functor $\mathcal{S}: \Ab(\mathbf{GL}|_{(X, \ast,\U_X,\D_X)}) \to  \mathbf{GLMod}_{(X, \ast,\U_X,\D_X)}$. Let $\mathcal{O}$ denote the abelian group object $p:(Y,\star,\U_Y, \D_Y) \rightarrow (X, \ast,\U_X,\D_X)$. We set $\mathcal{S}(\mathcal{O}) = (R,\phi,\psi,\zeta, \Omega)$, which is defined above. Let $f$ be a morphism from the abelian group object $p_1:(Y_1,\star_1,\U_{Y_1}, \D_{Y_1}) \rightarrow (X, \ast,\U_X,\D_X)$ to the abelian group object $p_2:(Y_2,\star_2,\U_{Y_2}, \D_{Y_2}) \rightarrow (X, \ast,\U_X,\D_X)$. Thus, if $m_1$ and $m_2$ denote the corresponding multiplication maps, then $f \, m_1= m_2 \,(f, f)$. Further, $f:(Y_1,\star_1,\U_{Y_1}, \D_{Y_1}) \rightarrow (Y_2,\star_2,\U_{Y_2}, \D_{Y_2})$ is a homomorphism of $\GL$-racks with $p_2\,f=p_1$. Let $\mathscr{F}_1=(R_1,\phi,\psi,\zeta,\Omega)$ and $\mathscr{F}_2=(R_2,\phi',\psi',\zeta',\Omega')$ denote the $(X, \ast,\U_X,\D_X)$-modules as described above. Define $g:\mathscr{F}_1 \rightarrow \mathscr{F}_2$ by setting
		$$g_x:{(R_1)}_x \rightarrow {(R_2)}_x~~\text{as}~~g_x(u)= f(u)$$
		for all $x\in X$ and $u \in {(R_1)}_x$. Since   $f \, m_1= m_2 \,(f, f)$, it follows that $g_x$ is an abelian group homomorphism for each $x \in X$. The identities  
		$\phi'_{x,y}g_x = g_{x\ast y} \phi_{x,y}$ and  $\psi'_{x,y}g_y = g_{x\ast y}\psi_{x,y}$ are  proved in \cite[Theorem 2.2]{MR2155522}. Further, we have 
		$$g_{\U_X(x)} \zeta_{x}(w)=g_{\U_X(x)} (\U_{Y_1}(w))=f(\U_{Y_1}(w))=\U_{Y_2}(f(w))=\U_{Y_2}g_x(w)=\zeta_x'(g_x(w))$$
		for all $w\in (R_1)_x$, and similarly it follows  that  $g_{\D_X(x)} \Omega_{x} = \Omega'_{x}g_x $. Hence, $g$ is an $(X, \ast,\U_X,\D_X)$-map. A routine check shows that $\mathcal{S}$ is a functor (see \cite[Thereom 2.2]{MR2155522}).
	\end{proof}
	
We now present the main result of this section.
	
	\begin{theorem}\label{equivalence of categories for symmetric rack modules}
		Let $(X, \ast,\U_X,\D_X)$ be a $\GL$-rack. Then the category $\mathbf{GLMod}_{(X, \ast,\U_X,\D_X)}$ of $(X, \ast,\U_X,\D_X)$-modules is equivalent to the category $\Ab(\mathbf{GL}|_{(X, \ast,\U_X,\D_X)})$ of abelian group objects in the slice category over $(X, \ast,\U_X,\D_X)$.
	\end{theorem}
	
	\begin{proof} Let $(X, \ast,\U_X,\D_X)$ be a $\GL$-rack. By Proposition \ref{abelian-group-object} and Proposition \ref{module}, we have functors $\mathcal{T}:  \mathbf{GLMod}_{(X, \ast,\U_X,\D_X)} \to \Ab(\mathbf{GL}|_{(X, \ast,\U_X,\D_X)})$ and $\mathcal{S}: \Ab(\mathbf{GL}|_{(X, \ast,\U_X,\D_X)}) \to \mathbf{GLMod}_{(X, \ast,\U_X,\D_X)}$. We show that $\mathcal{T}\mathcal{S}$ is naturally isomorphic to $\id_{\Ab(\mathbf{GL}|_{(X, \ast,\U_X,\D_X)})}$, whereas $\mathcal{S}\mathcal{T}$ is naturally isomorphic to $\id_{ \mathbf{GLMod}_{(X, \ast,\U_X,\D_X)}}$.
		\par
		
		Let $\mathcal{O} \in  \Ab(\mathbf{GL}|_{(X, \ast,\U_X,\D_X)})$ denote the object $p:(Y, \star,\U_Y,\D_Y) \rightarrow (X, \ast,\U_X,\D_X)$ with the multiplication map $m$,  the inverse map $i$, and the section $s$. Then $\mathcal{T}\mathcal{S} (\mathcal{O})$ is the object $q:(\mathscr{R} \rtimes X, \tilde{*},\U_{\mathscr{R} \rtimes X},\D_{\mathscr{R} \rtimes X}) \rightarrow (X, \ast,\U_X,\D_X)$ in the category $ \Ab(\mathbf{GL}|_{(X, \ast,\U_X,\D_X)})$, where $\mathscr{R}=(R, \phi,\psi,\omega,\zeta)$ is as constructed in Proposition \ref{module}.  Note that, the abelian group structure on $q:(\mathscr{R} \rtimes X, \tilde{*},\U_{\mathscr{R} \rtimes X},\D_{\mathscr{R} \rtimes X}) \rightarrow (X, \ast,\U_X,\D_X)$ has the multiplication, the inverse and the section given by
		\begin{itemize}
			\item $\mu ((a_1,x),(a_2,x)) = (a_1+a_2,x)=(m(a_1,a_2),x)$,
			\item $\sigma ((a,x)) = (0,x)=(s(a),x)$,
			\item $\nu((a,x)) = (-a,x)=(i(a),x)$,
		\end{itemize}
		for all $a_1,a_2, a \in R_x$ and $x \in X$. Define $\Lambda_{\mathcal{O}}: (Y, \star,\U_Y,\D_Y) \rightarrow (\mathscr{R} \rtimes X, \tilde{*},\U_{\mathscr{R} \rtimes X},\D_{\mathscr{R} \rtimes X})$ by setting $\Lambda_{\mathcal{O}}(a)=(a,x)$ for  $a \in R_x$. Clearly, $\Lambda_{\mathcal{O}}$ is a bijection and the diagram
		\begin{center}
			\begin{tikzcd}
				(Y, \star,\U_Y,\D_Y) \arrow[r, "\Lambda_{\mathcal{O}}"] \arrow[rd, "p"'] & (\mathscr{R} \rtimes X, \tilde{*},\U_{\mathscr{R} \rtimes X},\D_{\mathscr{R} \rtimes X}) \arrow[d, "q"] \\
				& (X, \ast,\U_X,\D_X)
			\end{tikzcd}
		\end{center}
		commutes, where $q$ is the projection onto the second coordinate. Note that, $\Lambda_{\mathcal{O}}(m(a,b))=\mu(\Lambda_{\mathcal{O}}(a)$, $\Lambda_{\mathcal{O}}(b)),~ \Lambda_{\mathcal{O}}(s(a))=\sigma(\Lambda_{\mathcal{O}}(a))$, and $\Lambda_{\mathcal{O}}(i(a))=\nu(\Lambda_{\mathcal{O}}(a))$ for all $a,b \in R_x$. Hence, $\Lambda_{\mathcal{O}}$ respects the abelian group structure. Now, for $a \in R_x$ and $b \in R_y$, we have
		\begin{eqnarray*}
			\Lambda_{\mathcal{O}}(a) \tilde{\ast} \Lambda_{\mathcal{O}}(b) &=& (a,x) \tilde{\ast} (b,y) \\
			&=& \big(\phi_{x,y}(a)+\psi_{x,y}(b), \,x \ast y \big)\\
			&=& \big(a \star s(y)+ s(x) \star b, \,x \ast y \big)\\
			&=& \big(m \big(a \star s(y),s(x) \star b \big), \,x \ast y \big)\\
			&=& \big(m((a,s(x)) \tilde{\star} (s(y),b)), \,x \ast y\big)\\
			&=&\big(m(a,s(x)) \star m(s(y),b), x \ast y\big),\quad \textrm{since $m: Y \times Y \to Y$ is a rack homomorphism}\\
			&=& (a \star b,x \ast y)\\
			&=& \Lambda_{\mathcal{O}}(a \star b),
		\end{eqnarray*}
		where $\tilde{\star}$ is the product rack operation on $Y \times Y$. Thus, $\Lambda_{\mathcal{O}}$ is a rack homomorphism. Since $\U_{\mathscr{R}\rtimes X} \, \Lambda_{\mathcal{O}}=\Lambda_{\mathcal{O}} \, \U_Y$ and $\D_{\mathscr{R}\rtimes X} \, \Lambda_{\mathcal{O}}=\Lambda_{\mathcal{O}} \, \D_Y$, it follows that $\Lambda_{\mathcal{O}}$ is a $\GL$-rack isomorphism. Hence, 
		$\Lambda_{\mathcal{O}}: \mathcal{O} \to \mathcal{T}\mathcal{S} (\mathcal{O})$ is an isomorphism in $\Ab(\mathbf{GL}|_{(X, \ast,\U_X,\D_X)})$.
		\par
		
		Let $\mathcal{O}$ and $\mathcal{O'}$ denote the objects $p:(Y, \star,\U_Y,\D_Y) \rightarrow (X, \ast,\U_X,\D_X)$ and $p':(Y', \star',\U_Y',\D_Y') \rightarrow (X, \ast,\U_X,\D_X)$ in $\Ab(\mathbf{GL}|_{(X, \ast,\U_X,\D_X)})$, respectively. Let $f:\mathcal{O} \rightarrow \mathcal{O'}$ be a morphism. Then, by definition, $f: (Y, \star,\U_Y,\D_Y) \to (Y', \star',\U_Y',\D_Y')$ is a $\GL$-rack homomorphism such that $p' \, f=p$ and $f$ preserves the abelian group structure. Let $x \in X$ and $a \in R_x$. Then $\Lambda_{\mathcal{O'}} \, f(a)=(f_x(a),x)$, where $f_x=f|_{R_x}$ and $$\mathcal{T} \mathcal{S}(f) \, \Lambda_{\mathcal{O}} (a)=\mathcal{T} \mathcal{S}(f)(a,x)= \mathcal{T}(f|_{R_x}(a))=\mathcal{T}(f_x(a))=(f_x(a),x).$$ Thus, the diagram
		\begin{center}
			\begin{tikzcd}[row sep=3em, column sep=4em]
				\mathcal{O} \arrow[r, "\Lambda_{\mathcal{O}}"] \arrow[d, "f"] & \mathcal{T}\mathcal{S}(\mathcal{O}) \arrow[d, "\mathcal{T}\mathcal{S}(f)"] \\
				\mathcal{O'} \arrow[r, "\Lambda_{\mathcal{O'}}"] & \mathcal{T}\mathcal{S}(\mathcal{O'})
			\end{tikzcd}
		\end{center}
		commutes, and $\mathcal{T}\mathcal{S}$ is naturally isomorphic to $\id_{\Ab(\mathbf{GL}|_{(X, \ast,\U_X,\D_X)})}$.
		\par
		
		Next, we prove that $\mathcal{S}\mathcal{T}$ is naturally isomorphic to $\id_{\mathbf{GLMod}_{(X, \ast,\U_X,\D_X)}}$. Let $\mathcal{O}$ be the object $\mathscr{F}=(A,\phi,\psi,\zeta,\Omega)$ in $\mathbf{GLMod}_{(X, \ast,\U_X,\D_X)}$. Then $\mathcal{T}(\mathcal{O})$ is the abelian group object $p:\mathscr{F} \rtimes X \rightarrow X$ as in Proposition \ref{abelian-group-object}, with prescribed $\mu$, $\sigma$ and $\nu$. For each $x \in X$, since $A'_{x}:=p^{-1}(x)=(A_x,x)$, it follows that $\mathcal{S}\mathcal{T}(\mathcal{O}) $ is the object $\mathscr{F'}:=(A',\phi',\psi',\zeta',\Omega')$, where
		\begin{align}
			\phi_{x,y}' (a,x) &= (a,x) \tilde{\ast} \sigma(y)=\ (a,x) \tilde{\ast} (0,y)=(\phi_{x,y}(a), \,x \ast y),  \label{I}\\
			\psi_{x,y}' (b,y) &= \sigma(x) \tilde{\ast} (b,y)=(0,x) \tilde{\ast} (b,y) =(\psi_{x,y}(b), \,x \ast y), \label{II}\\ 
			\zeta_x' (a,x)& = \U_{\mathscr{F} \rtimes X}(a,x)=(\zeta_x(a), \, \U_X(x)), \label{III}\\
			\Omega_x' (a,x)& = \D_{\mathscr{F} \rtimes X}(a,x)=(\Omega_x(a), \, \D_X(x)), \label{IV}
		\end{align}	
		for all $a\in A_x$, $b \in A_y$ and $x,y \in X$. 
		\par
		
		We now construct an $(X, \ast,\U_X,\D_X)$-module isomorphism $\lambda_{\mathcal{O}}: \mathcal{O} \to \mathcal{S}\mathcal{T}(\mathcal{O})$. For each $ x \in X$, define $(\lambda_{\mathcal{O}})_x:A_x \to A'_x$ as $a \mapsto (a,x)$ for all $a \in A_x$. Clearly, $(\lambda_{\mathcal{O}})_{x}$ is an abelian group isomorphism for each $x \in X$. Also, using \eqref{I}, \eqref{II}, \eqref{III} and \eqref{IV}, we have
		\begin{eqnarray*}
			\phi'_{x,y}(\lambda_{\mathcal{O}})_{x}(a) &=& \phi'_{x,y}(a,x)= (\phi_{x,y}(a), \,x \ast y) =(\lambda_{\mathcal{O}})_{x\ast y} \phi_{x,y}(a),\\
			\psi'_{x,y}(\lambda_{\mathcal{O}})_{y}(b)&=&\psi'_{x,y}(b,y) = (\psi_{x,y}(b), \,x \ast y)= (\lambda_{\mathcal{O}})_{x\ast y}\psi_{x,y}(b),\\
			\zeta'_{x}(\lambda_{\mathcal{O}})_{x}(a) &=& \zeta'_x(a,x)=(\zeta_x(a), \,\U_X(x))=(\lambda_{\mathcal{O}})_{\U_X(x)} \zeta_{x}(a),\\
			\Omega'_{x}(\lambda_{\mathcal{O}})_{x}(a) &=& \Omega'_x(a,x)=(\Omega_x(a), \,\D_X(x))=(\lambda_{\mathcal{O}})_{\D_X(x)} \Omega_{x}(a),
		\end{eqnarray*}
		for all $a\in A_x$, $b \in A_y$ and $x,y \in X$.  Thus, \hyperref[X-map]{\eqref{X-map-1}-\eqref{X-map-4}} hold, and $\lambda_{\mathcal{O}}$ is an $(X, \ast,\U_X,\D_X)$-module isomorphism. Finally, let $\mathcal{O}_1$ and $\mathcal{O}_2$ be the objects  $\mathscr{F}_1=(A_1,\phi_1,\psi_1,\zeta_1,\Omega_1)$ and  $\mathscr{F}_2=(A_2,\phi_2,\psi_2,\zeta_2,\Omega_2)$ in $\mathbf{GLMod}_{(X, \ast,\U_X,\D_X)}$, respectively.  Let $g:\mathcal{O}_1 \rightarrow \mathcal{O}_2$ be an $(X, \ast,\U_X,\D_X)$-map. It is easy to see that the diagram
		\begin{center}
			\begin{tikzcd}[row sep=3em, column sep=4em]
				\mathcal{O}_1 \arrow[r, "\lambda_{\mathcal{O}_1}"] \arrow[d, "g"] & \mathcal{S}\mathcal{T}(\mathcal{O}_1)  \arrow[d, "\mathcal{S}\mathcal{T}(g)"] \\
				\mathcal{O}_2\arrow[r, "\lambda_{\mathcal{O}_2}"] & \mathcal{S}\mathcal{T}(\mathcal{O}_2)	\end{tikzcd}
		\end{center}
		commutes.  Thus, the functor $\mathcal{S}\mathcal{T}$ is naturally isomorphic to $\id_{\mathbf{GLMod}_{(X, \ast,\U_X,\D_X)}}$. This proves that the categories $\Ab(\mathbf{GL}|_{(X, \ast,\U_X,\D_X)})$ and $\mathbf{GLMod}_{(X, \ast,\U_X,\D_X)}$ are equivalent.
	\end{proof}
\medskip
	\ack{BK thanks CSIR for the PhD research fellowship. DS thanks IISER Mohali for the PhD research fellowship. MS is supported by the SwarnaJayanti Fellowship grants DST/SJF/MSA-02/2018-19 and SB/SJF/2019-20/04.}
\medskip

\end{document}